\newtheorem{theorem}{Theorem}[section]
\newtheorem{prop}{Proposition}[section]
\newtheorem{cor}{Corollary}[section]
\newtheorem{lem}{Lemma}[section]
\numberwithin{equation}{section}
\begin{document}
\def\a{\alpha}
\def\b{\beta}
\def\om{\omega}
\def\t{\theta}
\def\e{\eta}
\def\vp{\varphi}
\def\l{\lambda}
\def\sg{\sigma}
\def\fl{\longrightarrow}
\def\V{\Vert}
\def\w{\widetilde}
\def\d{\delta}
\def\p{\partial}
\def\fra{\frac}
\def\s{\sum}
\def\n{\nabla}
\def\dif{{\rm d}}
\def\R{{\bf R}}
\def\N{{\bf N}}
\def\Z{{\bf Z}}
\def\C{{\bf C}}
\newtheorem{definition}{Definition}[section]
\newcommand{\dep}[1]{\displaystyle{\frac{\partial}{\partial #1}}}
\newcommand{\deri}[1]{\displaystyle{\frac{{\rm d}}{{\rm d} #1}}}
\newcommand{\dderi}[2]{\displaystyle{\frac{{\rm d} #1}{{\rm d} #2}}}
\newcommand{\grad}{{\rm grad}}
\newcommand{\ddep}[2]{\displaystyle{\frac{\partial #1}{\partial #2}}}
\renewcommand{\thesection}{\arabic{section}}
\renewcommand{\theequation}{\thesection.\arabic{equation}}

\title[A new class of metric $f$-manifolds]{A new class of metric $f$-manifolds}
\thanks{The first and the second authors are partially supported by the project MTM2014-52197-P (MINECO, Spain).}
\author{Pablo Alegre}
\address{Departamento de Econom\'{\i}a, M\'etodos Cuantitativos e Historia de la Econom\'{\i}a, Universidad Pablo de Olavide, Ctra. de Utrera, km. 1, 41013-Sevilla, Spain} \email{psalerue@upo.es}
\author{Luis M. Fern\'andez}
\address{Departamento de Geometr\'{\i}a y Topolog\'{\i}a, Facultad de Matem\'aticas, Universidad de Sevilla, C./ Tarfia, s.n., 41012-Sevilla, Spain.} \email{lmfer@us.es}
\author{Alicia Prieto-Mart\'{\i}n}
\address{Departamento de Geometr\'{\i}a y Topolog\'{\i}a, Facultad de Matem\'aticas, Universidad de Sevilla, C./ Tarfia, s.n., 41012-Sevilla, Spain.} \email{aliciaprieto@us.es}
\begin{abstract}
We introduce a new general class of metric $f$-man\-i\-folds which we call (almost) trans-$S$-manifolds and includes $S$-manifolds, $C$-manifolds, $s$-th Sasakian manifolds and generalized Kenmotsu manifolds studied previously. We prove their main properties and we present many examples which justify their study.
\end{abstract}
\subjclass[2010]{53C15,53C25,53C99}
\keywords{Almost trans-$S$-manifold, trans-$S$-manifold, generalized $D$-conformal deformation, warped product.}
\maketitle

\section{Introduction}\label{intro}

In complex geometry, the relationships between the different classes of manifolds can be summarize in the well known diagram by Blair \cite{BLAIRBOOK}:

$$\xymatrix{\fbox{$Complex$}\ar"1,2"^{metric} & \fbox{$Hermitian$}\ar"1,3"^{d\Omega=0} & \fbox{$Kaehler$}\\
\fbox{${\begin{array}{c}Almost\\ Complex\end{array}}$}\ar"1,1"^{[J,J]=0} \ar"2,2"^{metric} & \fbox{${\begin{array}{c}Almost \\Hermitian\end{array}}$}\ar"1,2"^{[J,J]=0} \ar"2,3"^{d\Omega=0}\ar"1,3"^{\nabla J=0} & \fbox{${\begin{array}{c}Almost \\ Kaehler\end{array}}$} \ar"1,3"^{[J,J]=0} }$$

And the same for contact geometry:

$$\xymatrix{\fbox{${\begin{array}{c}Normal\mbox{ }Almost\\Contact\end{array}}$}\ar"1,2"^{metric} & \fbox{${\begin{array}{c}Normal\mbox{ }Almost\\Contact\mbox{ }Metric\end{array}}$}\ar"1,4"^{\Phi=d\eta} &  & \fbox{$Sasakian$}\\
\fbox{${\begin{array}{c}Almost\\Contact\end{array}}$}\ar"1,1"^{normal} \ar"2,2"^{metric} & \fbox{${\begin{array}{c}Almost\\Contact\mbox{ }Metric\end{array}}$}\ar"1,2"^{normal} \ar"2,4"^{\Phi=d\eta}\ar"1,4"^{(1)} & & \fbox{${\begin{array}{c}Contact\\Metric\end{array}}$} \ar"1,4"^{normal} }$$

In the above diagram the almost contact structure $(\phi,\eta,\xi)$ is said to be normal if $[\phi,\phi]+2d\eta\otimes\xi=0$ and condition (1) is
$$(\nabla_X \phi)Y=g(X,Y)\xi-\eta(Y)X,$$
for any tangent vector fields $X$ and $Y$.

Moreover, an almost contact metric manifold $(M,\phi,\xi,\eta,g)$ is said to have an $(\alpha,\beta)$ trans-Sasakian structure if (see \cite{oubi} for more details)
\begin{equation}\label{trans}
(\nabla_X\phi)Y \ = \ \alpha\{g(X,Y)\xi-\eta(Y)X\}+\beta\{g(\phi X,Y)\xi-\eta(Y)\phi X\},
\end{equation}
where $\alpha,\beta$ are differentiable functions (called characteristic functions) on $M$. Particular cases of trans-Sasakian manifolds are Sasakian ($\alpha=1,\beta=0$), cosymplectic ($\alpha=\beta=0$) or Kenmotsu ($\alpha=0,\beta=1$) manifolds. In fact, we can extend the above diagram to

$$\xymatrix{\fbox{${\begin{array}{c}Normal\mbox{ }Almost\\Contact\mbox{ }Metric\end{array}}$}\ar"1,2"^{(\ref{deta4})} & \fbox{$Trans-Sasakian$}\\
\fbox{${\begin{array}{c}Almost\\Contact\mbox{ }Metric\end{array}}$}\ar"1,1"^{normal} \ar"2,2"^{(\ref{deta3})}\ar"1,2"^{(\ref{trans})} & \fbox{${\begin{array}{c}Almost\\Trans-Sasakian\end{array}}$}\ar"1,2"^{normal}}$$

\noindent where
\begin{equation}\label{deta3}
d\Phi=\Phi\wedge(\phi^*(\delta\Phi)-(\delta\eta)\eta),\quad d\eta=\frac{1}{2n}\{\delta\Phi(\xi)\Phi-2\eta\wedge\phi^*(\delta\Phi)\}.
\end{equation}
and:
\begin{equation}\label{deta4}
d\Phi=\frac{-1}{n}\delta\eta(\Phi\wedge\eta), \quad d\eta=\frac{1}{2n}\delta\Phi(\xi)\Phi, \quad \phi^*(\delta\Phi)=0.
\end{equation}

More in general, K. Yano \cite{Y} introduced the notion of $f$-structure on a $(2n+s)$-dimensional manifold as a tensor field $f$ of type (1,1) and rank $2n$ satisfying $f^3+f=0$. Almost complex ($s=0$) and almost contact ($s=1$) structures are well-known examples of $f$-structures. In this context, D.E. Blair \cite{B} defined $K$-manifolds (and particular cases of $S$-manifolds and $C$-manifolds). Then, $K$-manifolds are the analogue of Kaehlerian manifolds in the almost complex geometry and $S$-manifolds (resp., $C$-manifolds) of Sasakian manifolds (resp., cosymplectic manifolds) in the almost contact geometry. Consequently, one can obtain a similar diagram for metric $f$-manifolds, that is, manifolds endowed with an $f$-structure and a compatible metric.

The purpose of the present paper is to introduce a new class of metric $f$-manifolds which generalizes that one of trans-Sasakian manifolds. In this context, we notice that there has been a previous generalization of $(\alpha,0)$-trans-Sasakian manifolds for metric $f$-manifolds. It was due to I. Hasegawa, Y. Okuyama and T. Abe who introduced the so-called {\it homothetic $s$-contact Riemannian manifolds} in \cite{hoa} as metric $f$-manifolds such that $2c_ig(fX,Y)=d\eta_i(X,Y)$ for certain nonzero constants $c_i$, $i=1,\dots,s$ (actually, they use $p$ instead of $s$). In particular, if the structure vector fields $\xi_i$ are Killing vector fields and the $f$-structure is also normal, the manifold is called a  {\it homothetic $s$-th Sasakian manifold}. They proved that a homothetic $s$-contact Riemannian manifold is a homothetic $s$-th Sasakian manifold if and only if
\begin{equation*}\label{nabla1}
(\nabla_X f)Y \ = -\sum_{i=1}^sc_i\{g(fX,fY)\xi_i+\eta_i(Y)f^2X\},
\end{equation*} and
\begin{equation*}\label{nablaxi1}
\nabla_X\xi_i=c_ifX,
\end{equation*}
for any tangent vector fields $X$ and $Y$ and any $i=1,\dots,s$.

More recently, M. Falcitelli and A.M. Pastore have introduced $f$-structures of Kenmotsu type as those normal $f$-manifolds with $dF=2\eta^1\wedge F$ and $d\eta^i=0$ for $i=1,\dots,s$ \cite{falcitelli}. In this context, L. Bhatt and K.K. Dube \cite{BD} and A. Turgut Vanli and R. Sari \cite{TS} have studied a more general type of Kenmotsu $f$-manifolds for which all the structure 1-forms $\eta_i$ are closed and:
$$dF=2\displaystyle{\sum_{i=1}^s}\eta_i\wedge F.$$

These examples justify the idea of introducing the mentioned new more general class of metric $f$-manifolds, including the above ones, which we shall call {\it trans-$S$-manifolds} because trans-Sasakian manifolds become to be a particular case of them.

The paper is organized as follows: after a preliminaries section concerning metric $f$-manifolds, in Section \ref{sec3} we define almost trans-$S$-manifolds and trans-$S$-manifolds in terms of the derivative of the $f$-structure and some characteristic functions and study their main properties. Specially, we prove a characterization theorem which gives a necessary and sufficient condition for an almost trans-$S$-manifold to be a trans-$S$-manifold, that is, for the normality of the structure, concerning the derivative of the structure vector fields in any direction. Moreover, we observe that $S$-manifolds, $C$-manifolds and Kenmotsu $f$-manifolds actually are trans-$S$-manifolds. On the other hand, we get some desirable conditions to be satisfied for trans-$S$-manifolds in order to generalize those ones of trans-Sasakian manifolds. By using them, we characterize what trans-$S$-manifolds are $K$-manifolds and we justify that these two classes of metric $f$-manifolds are not related by inclusion.

Finally, in the last section, we present many non-trivial examples, that is, with non-constant characteristic functions, of (almost) trans-$S$-manifolds. To this end, we use generalized $D$-conformal deformations and warped products as tools.

{\it Acknowledgement}: The first and the second authors are partially supported by the project MTM2014-52197-P (MINECO, Spain).

\section{Metric $f$-manifolds}\label{fman}

A $(2n+s)$-dimensional Riemannian manifold $(M,g)$ endowed with an $f$-structure $f$ (that is, a tensor field of type (1,1) and rank $2n$ satisfying $f^3+f=0$ \cite{Y}) is said to be a {\it metric $f$-manifold} if, moreover, there exist $s$ global vector fields $\xi_1,\dots ,\xi_s$ on $M$ (called {\it structure vector fields}) such that, if $\e_1,\dots ,\e_s$ are the dual 1-forms of $\xi_1,\dots ,\xi_s$, then $$f\xi_\a=0;\mbox{ }\e_\a\circ f=0;\mbox{ }f^2=-I+\s_{\a=1}^s\e_\a\otimes\xi_\a;$$
\begin{equation} g(X,Y)=g(fX,fY)+\s_{i=1}^s\e_i(X)\e_i(Y),\end{equation}
for any $X,Y\in \mathcal{X}(M)$ and $i=1,\dots,s$. The distribution on $M$ spanned by the structure vector fields is denoted by $\mathcal{M}$ and its complementary orthogonal distribution is denoted by $\mathcal{L}$. Consequently, $TM=\mathcal{L}\oplus\mathcal{M}$. Moreover, if $X\in\mathcal{L}$, then $\e_\a(X)=0$, for any $\a=1,\dots ,s$ and if $X\in\mathcal{M}$, then $fX=0$.

For a metric $f$-manifold $M$ we can construct very useful local orthonormal basis of tangent vector fields. To this end, let $U$ be a coordinate neighborhood on $M$ and $X_1$ any unit vector field on $U$, orthogonal to the structure vector fields. Then, $fX_1$ is another unit vector field orthogonal to $X_1$ and to the structure vector fields too. Now, if it is possible, we choose a unit vector field $X_2$ orthogonal to the structure vector fields, to $X_1$ and to $fX_1$. Then, $fX_2$ is also a unit vector field orthogonal to the structure vector fields, to $X_1$, to $fX_1$ and to $X_2$. Proceeding in this way, we obtain a local orthonormal basis $\{X_i,fX_i,\xi_j\}$, $i=1,\dots,n$ and $j=1,\dots,s$, called an {\it $f$-basis}.

Let $F$ be the 2-form on $M$ defined by $F(X,Y)=g(X,fY)$, for any $X,Y\in\mathcal{X}(M)$. Since $f$ is of rank $2n$, then
$$\eta_1\wedge\cdots\wedge\eta_s\wedge F^n\neq 0$$
and, in particular, $M$ is orientable. A metric $f$-manifold is said to be a {\it metric $f$-contact manifold} if $F=\dif\e_i$, for any $i=1,\dots,s$.

The $f$-structure $f$ is said to be {\it normal} if
$$[f,f]+2\sum_{i=1}^s\xi_i\otimes d\eta_i=0,$$
where $[f,f]$ denotes the Nijenhuis tensor of $f$. If $f$ is normal, then \cite{GY}
\begin{equation}\label{normalbracket}
[\xi_i,\xi_j]=0,
\end{equation}
for any $i,j=1,\dots,s$.

A metric $f$-manifold is said to be a $K$-manifold \cite{B} if it is normal and $\dif F=0$. In a $K$-manifold $M$, the structure vector fields are Killing vector fields \cite{B}. A $K$-manifold is called an $S$-manifold if $F=\dif\e_i$, for any $i$ and a $C$-manifold if $\dif\e_i=0$, for any $i$. Note that, for $s=0$, a $K$-manifold is a Kaehlerian manifold and, for $s=1$, a $K$-manifold is a quasi-Sasakian manifold, an $S$-manifold is a Sasakian manifold and a $C$-manifold is a cosymplectic manifold. When $s\geq 2$, non-trivial examples can be found in \cite{B,hoa}. Moreover, a $K$-manifold $M$ is an $S$-manifold if and only if
\begin{equation}\label{nablaXinS}
\n_X\xi_i=-fX,\mbox{ }X\in\mathcal{X}(M),\mbox{ }i=1,\dots,s,
\end{equation}
and it is a $C$-manifold if and only if
\begin{equation}\label{nablaXinC}
\n_X\xi_i=0,\mbox{ }X\in\mathcal{X}(M),\mbox{ }i=1,\dots,s.
\end{equation}

It is easy to show that in an $S$-manifold,
\begin{equation}\label{nablaXf}
(\n_Xf)Y=\sum_{i=1}^s\left\{g(fX,fY)\xi_i+\e_i(Y)f^2X\right\},
\end{equation}
for any $X,Y\in\mathcal{X}(M)$ and in a $C$-manifold,
\begin{equation}\label{nablaXfinC}
\nabla f=0.
\end{equation}

\section{Definition of trans-S-manifolds and main properties} \label{sec3}
The original idea to define $(\alpha,\beta)$ trans-Sasakian manifolds is to generalize cosymplectic, Kenmotsu and Sasakian manifolds.
\begin{center}
\renewcommand
{\arraystretch}{1.5}
\begin{tabular}{|c | c| c|}
\hline
\begin{tabular}{c}
$\begin{array}{c}
\mbox{Kenmotsu:}\\
d\eta=0, normal\\
\hline
\mbox{Cosymplectic:}\\
 d\Phi=0,\quad d\eta=0, \\
  \mbox{normal}\\
 \hline
\mbox{Sasakian:}\\
\Phi=d\eta, \mbox{ normal}
\end{array}$
\end{tabular} &
$\begin{array}{c}
\mbox{Quasi-Sasakian:}\\
d\Phi=0, \\
\mbox{normal}
\end{array}$ &
$\begin{array}{c}
\mbox{Trans-Sasakian:}\\
d\Phi=2\beta(\Phi\wedge\eta),\\
d\eta=\alpha\Phi,\\
\phi^*(\delta\Phi)=0,\\
\mbox{normal}
\end{array}$\\
\hline
\end{tabular}
\end{center}

\bigskip
In the same way, our idea is to define {\it trans-$S$-manifolds} generalizing $C$-manifolds, $f$-Kenmotsu and $S$-manifolds.

As we said in the Introduction, an almost contact manifold is trans-Sasakian if and only if it (\ref{trans}) holds. This property aims us to introduce trans-$S$-manifolds.

\begin{definition}
A $(2n+s)$-dimensional metric $f$-manifold $M$ is said to be a {\bf almost trans-$S$-manifold} if it satisfies \begin{equation}\label{nabla}
\begin{split}
(\nabla_Xf)Y=\sum_{i=1}^s&\left[\alpha_i\{g(fX,fY)\xi_i+\eta_i(Y)f^2X\}\right.\\
&+\left.\beta_i\{g(fX,Y)\xi_i-\eta_i(Y)fX\}\right],
\end{split}
\end{equation}
for certain smooth functions $($called the {\bf characteristic functions}$)$ $\alpha_i,\beta_i$, $i=1....s$, on $M$ and any $X,Y\in\mathcal{X}(M)$. If, moreover, $M$ is normal, then it is said to be a {\bf trans-$S$-manifold}.
\end{definition}

So, if $s=1$, a trans-$S$-manifold is actually a trans-Sasakian manifold. Furthermore, in this case, condition (\ref{nabla}) implies normality. However, for $s\geq 2$, this does not hold. In fact, it is straightforward to prove that, for any $X,Y\in\mathcal{X}(M)$,
\begin{equation}\label{normal}
\begin{split}
[f,f](X,Y)+&2\sum_{i=1}^sd\eta_i(X,Y)\xi_i\\
&=\sum_{i,j=1}^s\left[\eta_j(\nabla_X\xi_i)\eta_j(Y)-\eta_j(\nabla_Y\xi_i)\eta_j(X)\right]\xi_i,
\end{split}
\end{equation}
which is not zero in general. But, in a trans-$S$-manifold, (\ref{normal}) implies that, for any $X\in\mathcal{X}(M)$ and any $i=1,\dots,s$:
$$\sum_{j=1}^s\eta_j(\nabla_X\xi_i)\eta_j(Y)-\sum_{j=1}^s\eta_j(\nabla_Y\xi_i)\eta_j(X)=0.$$

If we put $Y=\xi_k$, from (\ref{normalbracket}) we get that
\begin{equation}\label{etaknablaXxii}
\eta_k(\nabla_X\xi_i)=0,
\end{equation}
for any $i,k=1,\dots,s$. Using this fact, from (\ref{nabla}), we deduce that
\begin{equation}\label{ts2}
\nabla_X\xi_i=-\alpha_i f X-\beta_if^2X,
\end{equation}
for any $X\in\mathcal{X}(M)$ and any $i=1,\dots,s$.

Now, we can prove:
\begin{theorem}\label{teonablaxi}
A almost trans-$S$-manifold $M$ is a trans-$S$-man\-i\-fold if and only if (\ref{ts2}) holds for any $X\in\mathcal{X}(M)$ and any $i=1,\dots,s$.
\end{theorem}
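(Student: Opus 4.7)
The theorem has a forward direction (normality $\Rightarrow$ (\ref{ts2})) and a converse (assuming (\ref{ts2}) under the almost trans-$S$ hypothesis $\Rightarrow$ normality). My plan is to observe that both directions are essentially squeezed out of the single identity (\ref{normal}), so the proof reduces to checking that the covariant derivatives $\nabla_X \xi_i$ have zero components along each $\xi_j$ exactly when (\ref{ts2}) holds.

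For the direction ($\Rightarrow$), the argument is already sketched in the text preceding the statement: normality forces the left-hand side of (\ref{normal}) to vanish, then specializing $Y=\xi_k$ and using $[\xi_i,\xi_j]=0$ (which is (\ref{normalbracket})) gives $\eta_k(\nabla_X \xi_i)=0$. Plugging $Y=\xi_i$ into the defining relation (\ref{nabla}) and using $f\xi_i=0$, $\eta_j(\xi_i)=\delta_{ij}$, one computes $(\nabla_X f)\xi_i = \alpha_i f^2 X - \beta_i fX$. Since $(\nabla_X f)\xi_i = -f(\nabla_X \xi_i)$ and $f^3=-f$, applying $f$ to the candidate $-\alpha_i fX - \beta_i f^2 X$ yields the same vector; combined with $\eta_k(\nabla_X \xi_i)=0$ and the decomposition $TM=\mathcal{L}\oplus\mathcal{M}$, this forces (\ref{ts2}).

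For the converse ($\Leftarrow$), I would start from (\ref{ts2}) and immediately verify that $\eta_j(\nabla_X \xi_i)=0$ for all indices and all $X$. This is a one-line computation: $\eta_j(\nabla_X \xi_i) = -\alpha_i \eta_j(fX) - \beta_i \eta_j(f^2 X)$; the first term vanishes by $\eta_j\circ f=0$, and $\eta_j(f^2 X) = \eta_j(-X+\sum_k \eta_k(X)\xi_k) = -\eta_j(X)+\eta_j(X)=0$. Feeding these vanishings into the right-hand side of (\ref{normal}) makes it identically zero, which is precisely the normality condition $[f,f]+2\sum_i d\eta_i\otimes\xi_i=0$. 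Combined with the almost trans-$S$ hypothesis, this yields a trans-$S$-manifold.

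There is no real obstacle here; the work has been front-loaded into establishing (\ref{normal}) before the theorem statement. The only subtlety worth flagging in the writeup is the derivation of (\ref{ts2}) from (\ref{nabla}) in the forward direction, since a reader could wonder whether $(\nabla_X f)\xi_i = -f(\nabla_X \xi_i)$ determines $\nabla_X \xi_i$ uniquely; the key point, worth spelling out once, is that $f$ has kernel $\mathcal{M}=\mathrm{span}\{\xi_1,\dots,\xi_s\}$, so $\nabla_X \xi_i$ is pinned down modulo $\mathcal{M}$ by the $f$-equation, and the identities $\eta_k(\nabla_X \xi_i)=0$ remove the remaining ambiguity.
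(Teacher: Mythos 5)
Your proposal is correct and follows essentially the same route as the paper: both directions hinge on the identity (\ref{normal}), reducing normality to the vanishing of $\eta_j(\nabla_X\xi_i)$, with the forward direction being exactly the pre-theorem derivation of (\ref{ts2}) from (\ref{etaknablaXxii}) and (\ref{nabla}). Your converse is marginally more streamlined (you apply $\eta_j$ directly to (\ref{ts2}) rather than comparing with the general decomposition $\nabla_X\xi_i=-\alpha_i fX-\beta_i f^2X+\sum_j\eta_j(\nabla_X\xi_i)\xi_j$ as the paper does), but this is the same argument.
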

\begin{proof}
From (\ref{nabla}) we have that, for any $X\in\mathcal{X}(M)$ and any $i=1,\dots,s$:
$$\nabla_X\xi_i=-\alpha_i f X-\beta_if^2X+\sum_{j=1}^s\eta_j(\nabla_X \xi_i)\xi_j.$$

Comparing this equality and (\ref{ts2}) we have that $\eta_j(\nabla_X\xi_i)=0$, for any $i,j=1,\dots,s$. So, from (\ref{normal}), the metric $f$-manifold $M$ is normal and, consequently, a trans-$S$-manifold. The converse is obvious.
\end{proof}

Observe that (\ref{nabla}) can be re-written as
\begin{equation*}\label{nablaF}
\begin{split}
(\nabla_XF)(Y,Z)=\sum_{i=1}^s&[\a_i\{g(fX,fZ)\e_i(Y)-g(fX,fY)\e_i(Z)\}\\
&+\b_i\{g(X,fY)\e_i(Z)-g(X,fZ)\e_i(Y)\}],
\end{split}
\end{equation*}
for any $X,Y,Z\in\mathcal{X}(M)$. Then, if $X\in\mathcal{L}$ is a unit vector field, we have:
$$(\nabla_XF)(X,\xi_i)=-\a_i,\mbox{ }(\nabla_XF)(\xi_i,fX)=\b_i,\mbox{ }i=1,\dots,s.$$

Moreover, from (\ref{ts2}), we deduce
\begin{equation*}
(\nabla_X\e_i)Y=\a_ig(X,fY)+\b_ig(fX,fY),
\end{equation*}
for any $X,Y\in\mathcal{X}(M)$ and any $i=1,\dots,s$. Again, if $X\in\mathcal{L}$ is a unit vector field, we get:
$$(\nabla_X\e_i)fX=-\a_i,\mbox{ }(\nabla_X\e_i)X=\b_i,\mbox{ }i=1,\dots,s.$$

For trans-$S$-manifolds, we can prove the following theorem.

\begin{theorem}\label{alfabeta}
Let $M$ be a trans-$S$-manifold. Then, $(\delta F)\xi_i=2n\alpha_i$ and $\delta\eta_i=-2n\beta_i$, for any $i=1,\dots,s$.
\end{theorem}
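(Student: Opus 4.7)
The plan is to evaluate $\delta F$ and $\delta\eta_i$ by using the standard formulas
$$(\delta F)(Y)=-\sum_k(\nabla_{e_k}F)(e_k,Y),\qquad \delta\eta_i=-\sum_k(\nabla_{e_k}\eta_i)(e_k),$$
where $\{e_k\}$ is any local orthonormal frame. The natural choice on a metric $f$-manifold is an $f$-basis $\{X_1,\dots,X_n,fX_1,\dots,fX_n,\xi_1,\dots,\xi_s\}$ as introduced in Section \ref{fman}, because it diagonalises the tensorial expressions for $\nabla F$ and $\nabla\eta_i$ that the authors derived immediately after Theorem \ref{teonablaxi}.

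First I would substitute $X=e_k$, $Y=e_k$, $Z=\xi_k$ into the coordinate expression
$$(\nabla_XF)(Y,Z)=\sum_{i=1}^s[\alpha_i\{g(fX,fZ)\eta_i(Y)-g(fX,fY)\eta_i(Z)\}+\beta_i\{g(X,fY)\eta_i(Z)-g(X,fZ)\eta_i(Y)\}]$$
for each $e_k$ in the $f$-basis. Using $f\xi_j=0$, $\eta_i\circ f=0$, $\eta_i(\xi_j)=\delta_{ij}$, $g(X_a,fX_a)=0$ and $g(fX_a,fX_a)=g(X_a,X_a)=1$ (the last because $X_a\in\mathcal{L}$), one gets
$$(\nabla_{X_a}F)(X_a,\xi_k)=-\alpha_k,\qquad (\nabla_{fX_a}F)(fX_a,\xi_k)=-\alpha_k,\qquad (\nabla_{\xi_j}F)(\xi_j,\xi_k)=0,$$
the latter because every term contains a factor of the form $f\xi_j=0$ or $g(\xi_j,f\cdot)=0$. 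Summing over the $2n+s$ vectors of the $f$-basis yields
$$(\delta F)(\xi_k)=-\sum_{a=1}^n\bigl[(\nabla_{X_a}F)(X_a,\xi_k)+(\nabla_{fX_a}F)(fX_a,\xi_k)\bigr]=2n\alpha_k.$$

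For the second equality I would do exactly the same bookkeeping with the identity
$$(\nabla_X\eta_i)(Y)=\alpha_ig(X,fY)+\beta_ig(fX,fY)$$
already stated just before the theorem. With $X=Y=e_k$ one obtains $(\nabla_{X_a}\eta_i)(X_a)=\beta_i$, $(\nabla_{fX_a}\eta_i)(fX_a)=\beta_i$ (since $f^2X_a=-X_a$ gives $g(f^2X_a,f^2X_a)=1$), and $(\nabla_{\xi_j}\eta_i)(\xi_j)=0$; summing gives $\delta\eta_i=-2n\beta_i$.

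The computation is essentially mechanical once one commits to the $f$-basis, so the only real pitfall is the sign convention for the codifferential: one must use $\delta=-\mathrm{tr}_g\nabla$ (the convention under which $d$ and $\delta$ are formal adjoints up to sign on a Riemannian manifold) to get the signs in the statement. Apart from fixing that convention and carefully discarding all terms that vanish because of $f\xi_j=0$, $\eta_i(fX)=0$ and $g(X_a,fX_a)=0$, there is no substantive obstacle.
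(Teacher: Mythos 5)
Your proposal is correct and follows essentially the same route as the paper: both compute $\delta F$ and $\delta\eta_i$ as the negative trace of $\nabla F$ and $\nabla\eta_i$ over an $f$-basis, using the tensorial expressions $(\nabla_XF)(Y,Z)$ and $(\nabla_X\eta_i)Y$ derived from (\ref{nabla}) and (\ref{ts2}); the only cosmetic difference is that the paper first establishes $(\delta F)X=2n\sum_j\alpha_j\eta_j(X)$ for arbitrary $X$ before setting $X=\xi_i$, while you substitute $\xi_k$ directly. The bookkeeping and sign convention you use match the paper's.
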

\begin{proof}
Taking a $f$-basis $\{X_1,\dots,X_n,fX_1,\dots,fX_n,\xi_1,\dots,\xi_s\}$,  since
\begin{equation*}
\begin{split}
(\delta F)X=&-\sum_{k=1}^n\left\{(\nabla_{X_k}F)(X_k,X)+(\nabla_{fX_k}F)(fX_k,X)\right\}\\
&-\sum_{j=1}^s(\nabla_{\xi_j}F)(\xi_j,X)\\
=&\sum_{k=1}^n\left\{g(X_k,(\nabla_{X_k}\phi)X)+ g(fX_k,(\nabla_{fX_k}\phi)X)\right\},
\end{split}
\end{equation*}
for any $X\in\mathcal{X}(M)$, by using (\ref{nabla}) it is straightforward to obtain
\begin{equation}\label{ts3}
(\delta F)X=2n\sum_{j=1}^s\alpha_j\eta_j(X)
\end{equation}
and, putting $X=\xi_i$, it follows that $(\delta F)\xi_i=2n\alpha_i$.

Moreover,
$$\delta\eta_i=-\sum_{k=1}^n\left\{(\nabla_{X_k}\eta_i)X_k+(\nabla_{fX_k}\eta_i)fX_k\right\}-\sum_{j=1}^s
(\nabla_{\xi_j}\eta_i)\xi_j,$$
for any $i=1,\dots,s$. But, from (\ref{etaknablaXxii}) we get that $(\nabla_{\xi_j}\eta_i)\xi_j=0$, for any $j=1,\dots,s$. Consequently, by using (\ref{ts2})
\begin{equation*}
\begin{split}
\delta\eta_i=&-\sum_{k=1}^n\left\{g(X_k,\nabla_{X_k}\xi_i)+g(fX_k,\nabla_{fX_k}\xi_i)\right\}\\
=&-\sum_{k=1}^n\beta_i\left\{g(X_k,X_k)+g(fX_k,fX_k)\right\}=-2n\beta_i.
\end{split}
\end{equation*}
which concludes the proof.
\end{proof}

The above theorem generalizes the result given by D.E. Blair and J.A. Oubi\~{n}a in \cite{BO} for trans-Sasakian manifolds. Moreover, trans-$S$-manifolds verify certain desirable conditions.

\begin{prop}\label{p31}
Let $M$ be a trans-S-manifold. The following equations are verified:
\begin{enumerate}
\item[(i)] $dF=2F\wedge\displaystyle{\sum_{i=1}^s}\beta_i\eta_i$;
\item[(ii)] $d\eta_i=\alpha_iF$, $i=1,\dots,s$;
\item[(iii)] $f^*(\delta F)=0$.
\end{enumerate}
\end{prop}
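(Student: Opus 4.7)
My plan is to prove the three items separately, each by a short direct calculation using the formulas for $\nabla f$, $\nabla\eta_i$ and $\nabla F$ already collected in Section \ref{sec3}, together with the standard torsion-free identities relating $d$ with $\nabla$ on $1$- and $2$-forms. No appeal to normality is needed beyond what is already packaged into those formulas.

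For item (ii) I intend to apply the identity $d\eta_i(X,Y) = \tfrac{1}{2}\bigl((\nabla_X\eta_i)Y - (\nabla_Y\eta_i)X\bigr)$ and substitute the expression $(\nabla_X\eta_i)Y = \alpha_i g(X,fY) + \beta_i g(fX,fY)$ derived just after Theorem \ref{teonablaxi}. The $\beta_i$-piece drops out because $g(fX,fY)$ is symmetric in $X,Y$, and the $\alpha_i$-piece doubles up because $F$ is skew-symmetric, giving $d\eta_i(X,Y) = \alpha_i F(X,Y)$ at once.

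For item (i) the strategy is analogous, one degree higher. I will use $dF(X,Y,Z) = \tfrac{1}{3}\bigl((\nabla_X F)(Y,Z) + (\nabla_Y F)(Z,X) + (\nabla_Z F)(X,Y)\bigr)$ together with the displayed formula for $(\nabla_X F)(Y,Z)$ just after Theorem \ref{teonablaxi}. The cyclic sum splits into an $\alpha_i$-block and a $\beta_i$-block. The $\alpha_i$-block contains six terms of the shape $\pm\alpha_i\, g(f\cdot,f\cdot)\,\eta_i(\cdot)$, and a direct inspection shows they cancel in pairs thanks to the symmetry of $g(f\cdot,f\cdot)$. The $\beta_i$-block produces $2\beta_i\bigl(F(X,Y)\eta_i(Z)+F(Y,Z)\eta_i(X)+F(Z,X)\eta_i(Y)\bigr)$ after exploiting $F(X,Y)=-F(Y,X)$, which is exactly $2\beta_i\,(F\wedge\eta_i)(X,Y,Z)$ under the chosen wedge-product convention. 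Summing over $i$ yields the claimed $dF = 2F\wedge\sum_i\beta_i\eta_i$.

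Item (iii) is essentially a corollary of Theorem \ref{alfabeta}. In the proof of that theorem it was established (see (\ref{ts3})) that $(\delta F)X = 2n\sum_{j=1}^s\alpha_j\eta_j(X)$. Since $f^*(\delta F)$ acts by $X\mapsto(\delta F)(fX)$ and $\eta_j\circ f=0$ for every $j=1,\dots,s$ by the very definition of a metric $f$-manifold, evaluating at $fX$ annihilates every summand and gives $f^*(\delta F)=0$.

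The only genuinely delicate step is the cancellation pattern in (i): six $\alpha_i$-terms must be grouped correctly, and one must also fix once and for all compatible conventions for the factors in $dF$ and in the wedge product (the pair of conventions is self-consistent, as a quick sanity check against Sasakian manifolds, where $dF=0$ and $d\eta=F$ in the $s=1$, $\alpha=1,\beta=0$ case, confirms). Apart from that bookkeeping, each item is a one-line substitution.
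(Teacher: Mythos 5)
Your proposal is correct and follows essentially the same route as the paper: item (i) is the same cyclic-sum computation of $dF$ from the covariant derivative of $F$ (equivalently of $f$, via (\ref{nabla})), with the $\alpha_i$-terms cancelling by the symmetry of $g(f\cdot,f\cdot)$ and the $\beta_i$-terms assembling into $2F\wedge\sum_i\beta_i\eta_i$; item (ii) is exactly the paper's appeal to (\ref{ts2}) through $(\nabla_X\eta_i)Y$; and item (iii) is exactly the paper's observation that (\ref{ts3}) composed with $f$ vanishes because $\eta_j\circ f=0$. Your explicit flagging of the compatibility of the conventions for $d$ and $\wedge$ is a sound precaution (the paper leaves this implicit), but it does not change the argument.
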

\begin{proof}
From (\ref{nabla}), a direct computation gives, for any $X,Y,Z\in\mathcal{X}(M)$:
\begin{equation*}
\begin{split}
dF(X,Y,Z)=&-g((\nabla_Xf)Y,Z)+g((\nabla_Yf)X,Z)-g((\nabla_Zf)X,Y)\\
=&2\sum_{i=1}^s\{-\beta_i\eta_i(Z)g(fX,Y)+\beta_i\eta_i(Y)g(fX,Z)\\
&-\beta_i\eta_i(X)g(fY,Z)\}\\
=&2(F\wedge\sum_{i=1}^s\beta_i\eta_i)(X,Y,Z).
\end{split}
\end{equation*}

Next, from (\ref{ts2}) it is obtained the second statement. Finally, from (\ref{ts3}) we get (iii).
\end{proof}

From $(ii)$ of the above proposition we observe that if one of the functions $\a_i$ is a non-zero constant function, then the 2-form $F$ is closed and the trans-$S$-manifold $M$ is a $K$-manifold. Moreover we can prove:
\begin{theorem}
A trans-$S$-manifold $M$ is a $K$-manifold if and only if $\b_1=\cdots=\b_s=0$.
\end{theorem}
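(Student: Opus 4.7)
The plan is to reduce the $K$-manifold condition to vanishing of the exterior derivative $dF$ and then read off the $\beta_i$ from the formula for $dF$ established in Proposition~\ref{p31}.

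Since a trans-$S$-manifold is normal by definition, the only additional requirement for $M$ to be a $K$-manifold is $dF=0$. By Proposition~\ref{p31}(i) we have
\[
dF = 2\,F\wedge\sum_{i=1}^s\beta_i\eta_i,
\]
so the forward implication (``if'') is immediate: if every $\beta_i$ vanishes, then $dF=0$ and $M$ is a $K$-manifold.

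For the converse, assume $M$ is a $K$-manifold, so $F\wedge\sum_{i=1}^s\beta_i\eta_i=0$. The key step is to test this 3-form on a carefully chosen triple from an $f$-basis $\{X_1,\dots,X_n,fX_1,\dots,fX_n,\xi_1,\dots,\xi_s\}$ (which exists by the discussion in Section~\ref{fman}; note $n\geq 1$ since $f$ has rank $2n>0$). For fixed $k\in\{1,\dots,s\}$ I would evaluate on $(X_1,fX_1,\xi_k)$. Using $\eta_j(X_1)=\eta_j(fX_1)=0$, $F(X_1,\xi_k)=g(X_1,f\xi_k)=0$, $F(fX_1,\xi_k)=0$, and the identity
\[
F(X_1,fX_1)=g(X_1,f^2X_1)=-g(X_1,X_1)=-1,
\]
the definition of the wedge product collapses to
\[
(F\wedge\eta_j)(X_1,fX_1,\xi_k)=F(X_1,fX_1)\,\eta_j(\xi_k)=-\delta_{jk}.
\]
Consequently,
\[
0=\Bigl(F\wedge\sum_{i=1}^s\beta_i\eta_i\Bigr)(X_1,fX_1,\xi_k)=-\beta_k,
\]
and since $k$ was arbitrary this yields $\beta_1=\cdots=\beta_s=0$.

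No serious obstacle is expected: the only slightly technical point is the wedge computation, and it is routine once one notices that all terms involving $\eta_j(X_1)$, $\eta_j(fX_1)$, or $F(\cdot,\xi_k)$ vanish. The whole argument rests on the algebraic fact that plugging a ``horizontal'' pair $(X_1,fX_1)\in\mathcal{L}\times\mathcal{L}$ together with a structure vector $\xi_k\in\mathcal{M}$ into $F\wedge\eta_j$ isolates $\beta_k$, which is exactly what makes the formula in Proposition~\ref{p31}(i) detect each $\beta_i$ independently.
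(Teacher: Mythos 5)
Your proof is correct, but the converse direction follows a genuinely different route from the paper's. The paper deduces $\beta_i=0$ from a structural identity valid in any $K$-manifold (quoted from \cite{F}), namely $g((\nabla_Xf)Y,Z)=\sum_i\{d\eta_i(fY,X)\eta_i(Z)-d\eta_i(fZ,X)\eta_i(Y)\}$, which it combines with $d\eta_i=\alpha_iF$ from Proposition \ref{p31}(ii) and the defining relation (\ref{nabla}) to see that the $\beta_i$-terms must vanish. You instead observe that, normality being already granted, the $K$-condition reduces to $dF=0$, and then extract each $\beta_k$ by evaluating the 3-form $F\wedge\sum_i\beta_i\eta_i$ of Proposition \ref{p31}(i) on the triple $(X_1,fX_1,\xi_k)$ from an $f$-basis; your wedge computation is consistent with the cyclic-sum convention the paper itself uses in proving Proposition \ref{p31}(i), and the standing assumption $n\geq 1$ you flag is indeed needed (and implicit throughout the paper). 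Your argument is more elementary and self-contained: it avoids the external identity from the thesis \cite{F} and shows that Proposition \ref{p31}(i) alone already detects each $\beta_i$ pointwise, whereas the paper's route leans on a known $K$-manifold formula and on part (ii) of the proposition. Both are valid; the paper's version has the mild advantage of exhibiting the link with the $d\eta_i$ data, while yours makes transparent exactly which algebraic feature of $F\wedge\eta_i$ forces the conclusion.
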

\begin{proof}
Firstly, if all the functions $\b_i$ are equal to zero, from $(i)$ of Proposition \ref{p31} we get $dF=0$ and $M$ is a $K$-manifold.

Conversely, it is known (see \cite{F}) that, for $K$-manifolds, the following formula holds, for any $X,Y,Z\in\mathcal{X}(M)$:
$$g((\nabla_Xf)Y,Z)=\sum_{i=1}^s\{d\e_i(fY,X)\e_i(Z)-d\e_i(fZ,X)\e_i(Y)\}.$$

Consequently, from $(ii)$ of Proposition \ref{p31} and (\ref{nabla}) we conclude that $\b_i=0$, for any $i=1,\dots,s$.
\end{proof}

From Theorem \ref{alfabeta} we deduce:
\begin{cor}
A trans-$S$-manifold $M$ is a $K$-manifold if and only if $\delta\e_i=0$, for any $i=1,\dots,s$.
\end{cor}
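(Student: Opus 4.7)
The plan is to chain together the two previous results already established in the paper. Theorem \ref{alfabeta} provides the identity $\delta\eta_i = -2n\beta_i$ for each $i=1,\dots,s$, and the theorem immediately preceding the corollary characterizes the $K$-manifold condition within the class of trans-$S$-manifolds as the vanishing of all the characteristic functions $\beta_i$. Since $2n \neq 0$, the relation $\delta\eta_i = -2n\beta_i$ is an equivalence: $\beta_i = 0$ if and only if $\delta\eta_i = 0$.

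Concretely, I would proceed as follows. First, assume $M$ is a $K$-manifold. By the previous theorem, this forces $\beta_i = 0$ for all $i=1,\dots,s$. Applying Theorem \ref{alfabeta} yields $\delta\eta_i = -2n\beta_i = 0$ for each $i$. Conversely, if $\delta\eta_i = 0$ for every $i=1,\dots,s$, then Theorem \ref{alfabeta} gives $-2n\beta_i = 0$, hence $\beta_i = 0$ for all $i$ (using $n \geq 1$). Invoking the previous theorem in the other direction, $M$ is a $K$-manifold.

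There is no real obstacle here; the corollary is a one-line consequence of combining the two preceding results, simply translating the analytic condition $\delta\eta_i = 0$ into the algebraic condition $\beta_i = 0$ via Theorem \ref{alfabeta}. The only implicit assumption worth flagging is that $n \geq 1$, which is standard since otherwise the $f$-structure has rank zero and the statement is vacuous.
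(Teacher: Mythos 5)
Your proof is correct and follows exactly the paper's (implicit) argument: the corollary is stated as a direct consequence of Theorem \ref{alfabeta}, which gives $\delta\eta_i=-2n\beta_i$, combined with the preceding theorem characterizing $K$-manifolds among trans-$S$-manifolds by $\beta_1=\cdots=\beta_s=0$. Nothing is missing.
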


Furthermore, taking into account (\ref{nablaXinS}) and (\ref{nablaXinC}), we have:
\begin{cor}
Any trans-$S$-manifold is an $S$-manifold if and only if $\a_i=1$, $\b_i=0$ and it is a $C$-manifold if and only if $\a_i=\b_i=0$, in both cases for any $i=1,\dots,s$.
\end{cor}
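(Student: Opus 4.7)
The plan is to combine the formula $\nabla_X\xi_i = -\alpha_i fX - \beta_i f^2 X$ from (\ref{ts2}), which holds on every trans-$S$-manifold, with the characterizations (\ref{nablaXinS}) and (\ref{nablaXinC}) of $S$-manifolds and $C$-manifolds among $K$-manifolds. A useful preliminary remark is that if $\beta_1 = \cdots = \beta_s = 0$, then Proposition \ref{p31}(i) gives $dF = 0$, and since trans-$S$-manifolds are normal by definition, such a manifold is automatically a $K$-manifold; this is exactly what is needed in order to invoke (\ref{nablaXinS}) and (\ref{nablaXinC}) in the ``if'' directions of both equivalences.

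For the $S$-manifold equivalence, the ``if'' direction is immediate: substituting $\alpha_i = 1$ and $\beta_i = 0$ into (\ref{ts2}) yields $\nabla_X\xi_i = -fX$, and by the preceding remark $M$ is a $K$-manifold, so (\ref{nablaXinS}) forces $M$ to be an $S$-manifold. For the ``only if'' direction I would compare $\nabla_X\xi_i = -fX$ with (\ref{ts2}) to obtain $(1-\alpha_i) fX - \beta_i f^2 X = 0$ for every $X$; picking a unit $X \in \mathcal{L}$ so that $f^2 X = -X$, and using that $X$ and $fX$ are orthogonal (hence linearly independent) in $\mathcal{L}$, I can read off $\alpha_i = 1$ and $\beta_i = 0$ separately.

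The $C$-manifold equivalence is entirely parallel. If $\alpha_i = \beta_i = 0$, then (\ref{ts2}) gives $\nabla_X\xi_i = 0$, $M$ is a $K$-manifold by the same remark, and (\ref{nablaXinC}) delivers a $C$-manifold; conversely, $\nabla_X\xi_i = 0$ together with (\ref{ts2}) yields $\alpha_i fX + \beta_i f^2 X = 0$ for every $X$, and the same linear-independence argument on $\mathcal{L}$ forces $\alpha_i = \beta_i = 0$.

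There is no genuine obstacle in this argument; the only step that requires a moment's care is the last one, namely that on the horizontal distribution $\mathcal{L}$ one has $f^2 = -I$, which is what makes the two summands $\alpha_i fX$ and $\beta_i f^2 X$ linearly independent and allows the two characteristic functions to be isolated.
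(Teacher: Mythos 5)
Your argument is correct and follows exactly the route the paper intends: combining $\nabla_X\xi_i=-\alpha_ifX-\beta_if^2X$ from (\ref{ts2}) with the characterizations (\ref{nablaXinS}) and (\ref{nablaXinC}), after first securing the $K$-manifold hypothesis via $\beta_1=\cdots=\beta_s=0$. The paper leaves all of these details implicit, and your preliminary remark about needing $dF=0$ before invoking (\ref{nablaXinS}) and (\ref{nablaXinC}), as well as the linear-independence step on $\mathcal{L}$ isolating $\alpha_i$ and $\beta_i$, are exactly the right ways to fill them in.
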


In next section, we shall present some examples of trans-$S$-manifolds which are not $K$-manifolds due to not all their characteristic functions $\b_i$ are zero. Now, the natural question is if any $K$-manifold is a trans-$S$-manifold. In general, the answer in negative and to this end, we can consider the following example.

Let $(N,J,G)$ be a Kaehler manifold, $(M,f,\xi_1,\dots,\xi_s,\e_1,\dots,\e_s,g)$ be an $S$-manifold and $\w M=N\times M$.

If $\w X=U+X, \w Y=V+Y\in\mathcal{X}(\w M)$, where $U,V\in\mathcal{X}(N)$ and $X,Y\in\mathcal{X}(M)$, respectively, we can define a metric $f$-structure on $\w M$ by the following structure elements:
$$\w f(U+X)=JU+fX,\mbox{ }\w\xi_i=0+\xi_i,\mbox{ }\w\e_i(U+X)\e_i(X),\mbox{ }i=1,\dots,s,$$
$$\w g(U+X,V+Y)=G(U,V)+g(X,Y).$$

It is straightforward to check that $\w M$ with this structure is a $K$-manifold. However, it is not a trans-$S$-manifold. In fact, since $N$ is a Kaehler manifold and so, $J$ is parallel, if $\nabla$ and $\w\nabla$ denote the Riemannian connections of $M$ and $\w M$, respectively, then
$$(\w\n_{\w X}\w f)\w Y=0+(\n_Xf)Y$$
and, consequently, (\ref{nabla}) does not hold for $\w M$.

However, we can observe that, from (\ref{nablaXf}) and (\ref{nablaXfinC}), the particular cases of $S$-manifolds and $C$-manifolds are trans-$S$-manifolds.

On the other hand, it is known \cite{B} that, in a $K$-manifold, all the structure vector fields are Killing vector fields. For trans-$S$-manifolds we can prove:

\begin{prop}
Let $M$ be a trans-$S$-manifold. Then, the structure vector field $\xi_i$ is a Killing vector field if and only if the corresponding characteristic function $\beta_i=0$.
\end{prop}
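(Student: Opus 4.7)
The plan is to compute the symmetric tensor associated with $\mathcal{L}_{\xi_i}g$ explicitly and show that it reduces to $2\beta_i g(fX,fY)$, after which both implications follow at once.

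First I would recall that $\xi_i$ is a Killing vector field if and only if
\[
g(\nabla_X\xi_i,Y)+g(\nabla_Y\xi_i,X)=0, \qquad X,Y\in\mathcal{X}(M).
\]
Since $M$ is a trans-$S$-manifold, (\ref{ts2}) gives $\nabla_X\xi_i=-\alpha_i fX-\beta_i f^2X$, so the expression above equals
\[
-\alpha_i\{g(fX,Y)+g(X,fY)\}-\beta_i\{g(f^2X,Y)+g(X,f^2Y)\}.
\]

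Next I would deal with the two brackets separately. For the first one, the metric compatibility $g(fX,fY)=g(X,Y)-\sum_k\eta_k(X)\eta_k(Y)$, together with $\eta_k\circ f=0$ and $f\xi_k=0$, yields the skew-symmetry $g(fX,Y)=-g(X,fY)$, so the $\alpha_i$-term vanishes identically. For the second bracket, I would substitute $f^2=-I+\sum_\alpha\eta_\alpha\otimes\xi_\alpha$ and obtain
\[
g(f^2X,Y)+g(X,f^2Y)=-2g(X,Y)+2\sum_k\eta_k(X)\eta_k(Y)=-2g(fX,fY).
\]
Putting the two computations together produces the clean identity
\[
(\mathcal{L}_{\xi_i}g)(X,Y)=g(\nabla_X\xi_i,Y)+g(\nabla_Y\xi_i,X)=2\beta_i\, g(fX,fY).
\]

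Given this identity, the equivalence is immediate. If $\beta_i=0$ the right-hand side vanishes and $\xi_i$ is Killing. Conversely, if $\xi_i$ is Killing then $\beta_i\,g(fX,fY)=0$ for all $X,Y$; evaluating at any unit vector field $X\in\mathcal{L}$ with $Y=X$ gives $g(fX,fX)=g(X,X)-\sum_k\eta_k(X)^2=1$, which forces $\beta_i=0$. No step here looks delicate: the only point that requires a little care is the algebraic simplification of $g(f^2X,Y)+g(X,f^2Y)$ using the $f$-structure identities, but this is routine once one keeps track of $\eta_k\circ f=0$ and $f\xi_k=0$.
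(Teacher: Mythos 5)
Your proof is correct and follows exactly the paper's route: the paper's proof is the single line ``a direct computation by using (\ref{ts2}) gives $(L_{\xi_i}g)(X,Y)=2\beta_i g(fX,fY)$,'' and your argument simply carries out that computation in detail (the skew-symmetry killing the $\alpha_i$-term and the identity $g(f^2X,Y)+g(X,f^2Y)=-2g(fX,fY)$) and then draws the same conclusion. Nothing to change.
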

\begin{proof}
A direct computation by using (\ref{ts2}) gives
$$(L_{\xi_i}g)(X,Y)=2\beta_ig(fX,fY),$$
for any $X,Y\in\mathcal{X}(M)$. This completes the proof.
\end{proof}

\section{Examples of trans-$S$-manifolds}\label{examples}

As we have mentioned above, it is obvious that, from (\ref{nablaXf}) and (\ref{nablaXfinC}), $S$-manifolds and $C$-manifolds are trans-$S$-manifolds. Moreover, the homothetic $s$-th Sasakian manifolds of \cite{hoa} are also trans-$S$-manifolds with the function $\alpha_i$ constant and $\beta_i=0$, for any $i$.

From Propositions 2.2 and 2.5 of \cite{falcitelli}, we see that $f$-manifolds of Kenmotsu type, introduced by M. Falcitelli and A.M. Pastore, actually are trans-$S$-manifolds with functions $\alpha_1=\cdots=\alpha_s=\beta_2=\cdots=\beta_s=0$ and $\beta_1=1$.

Also, from Theorem 2.4 in \cite{TS}, we see that generalized Kenmotsu manifolds studied by L. Bhatt and K.K. Dube \cite{BD} and A. Turgut Vanli and R. Sari \cite{TS} are trans-$S$-manifolds with functions  $\alpha_1=\cdots=\alpha_s=0$ and $\beta_1=\cdots=\beta_s=1$.

Then, we are going to look for examples with different non-constant functions $\alpha_i$ and $\beta_i$. We shall obtain these examples by using $D$-conformal deformations and warped products.

Firstly, given a metric $f$-manifold $(M,f,\xi_1,\dots,\xi_s,\eta_1,\dots,\eta_s,g)$, let
us consider the {\it generalized $D$-conformal deformation} given by
\begin{equation}\label{dconf}
\widetilde f=f, \quad \widetilde\xi_i=\frac{1}{a}\xi_i, \quad \widetilde\eta_i=a\eta_i, \quad
\widetilde g= bg + (a^2-b)\sum_{i=1}^s\eta_i\otimes\eta_i,
\end{equation}
for any $i=1,\dots,s$, where $a,b$ are two positive differentiable functions on $M$. Then, it is easy to
see that $(M,\widetilde f,\widetilde\xi_1,\dots,\widetilde\xi_s,\widetilde\eta_1,\dots,\widetilde\eta_s,\widetilde g)$ is also a metric $f$-manifold. Let us notice that we can obtain conformal, $D$-homothetic (see \cite{tanno}) or $D$-conformal (in the sense of S. Suguri and S. Nakayama \cite{suguri}) deformations, by putting $a^2=b$, $a=b=constant$ or $a=b$ in (\ref{dconf}), respectively. In \cite{olszack} Z. Olszack considered $a$ and $b$ constants, $a\neq 0$, $b>0$ but not necesarily equal and he also called the resulting transformation a $D$-homothetic deformation.

Moreover, let us suppose that $M$ is a trans-$S$-manifold and that $a,b$ depend only on the directions of the structure vector fields $\xi_i$, $i=1,\dots,s$. Therefore, we can calculate $\widetilde\nabla$ from $\nabla$ and $\widetilde g$ by using Koszul's formula and (\ref{ts2}). It follows that the Riemannian connection $\widetilde\nabla$ of $\widetilde g$ is given by
\begin{equation}\label{conexiondconf}
\begin{split}
\widetilde\nabla_XY &= \nabla_XY +\sum_{i=1}^s\frac{2(a^2-b)\beta_i-\xi_ib}{2a^2} g(f X,f Y)\xi_i\\
 &-\frac{1}{2b}\{(Xb)f^2Y+(Yb)f^2X\}\\
 &+\frac{1}{2a^2}\sum_{i=1}^s\left\{(Xa^2)\eta_i(Y)+(Ya^2)\eta_i(X)\right.\\
 &-(\xi_ia^2)\sum_{j=1}^s\left.\eta_j(X)\eta_j(Y)\right\}\xi_i\\
 &-\frac{a^2-b}{b}\sum_{i=1}^s\alpha_i\{\eta_i(Y)fX+\eta_i(X)fY\},\\
\end{split}
\end{equation}
for any vector fields $X,Y\in\mathcal{X}(M)$.
\begin{theorem}\label{dancinginthedark}
Let $(M,f,\xi_1,\dots,\xi_s,\eta_1,\dots,\eta_s,g)$ be a trans-$S$-manifold and consider a ge\-ne\-ra\-lized $D$-conformal deformation on $M$, with $a,b$ positive functions depen\-ding only on the directions of the structure vector fields. Then $(M,\widetilde f,\widetilde\xi_1,\dots,\widetilde\xi_s,\widetilde\eta_1,\dots,\widetilde\eta_s,\widetilde g)$ is also a trans-$S$-manifold with functions:
\begin{equation*}\label{stilllovingyou}
\widetilde\alpha_i=\frac{\alpha_i a}{b},\mbox{  }\widetilde\beta_i=\frac{\xi_ib}{2ab}+\frac{\beta_i}{a},\mbox{  }i=1,\dots,s.
\end{equation*}
\end{theorem}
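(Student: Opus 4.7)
The plan is to verify the defining identity (\ref{nabla}) for the deformed structure $(\widetilde f,\widetilde\xi_i,\widetilde\eta_i,\widetilde g)$ with the proposed functions $\widetilde\alpha_i, \widetilde\beta_i$, and then to deduce normality by applying Theorem \ref{teonablaxi}, i.e.\ by checking (\ref{ts2}) in the deformed setting. Since $\widetilde f = f$, the whole argument reduces to two explicit tensor calculations based on formula (\ref{conexiondconf}), the original identity (\ref{nabla}), and the standard $f$-structure identities $f\xi_i = 0$, $\eta_i\circ f = 0$, $f^2 = -I + \sum_i \eta_i\otimes\xi_i$, $f^3 = -f$. A useful consequence of the hypothesis on $a,b$ is that $Xa = \sum_i \eta_i(X)(\xi_i a)$ and similarly for $b$, so in particular $(fY)a = (fY)b = 0$ and $Xa = Xb = 0$ whenever $X\in\mathcal{L}$.

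For the first step I would write $(\widetilde\nabla_X\widetilde f)Y = \widetilde\nabla_X(fY) - f(\widetilde\nabla_X Y)$ and expand both terms using (\ref{conexiondconf}). Many terms drop out immediately: in $\widetilde\nabla_X(fY)$ every occurrence of $\eta_i(fY)$, $(fY)a$ and $(fY)b$ vanishes while $f^2(fY) = -fY$; in $f(\widetilde\nabla_X Y)$ every term headed by $\xi_j$ is killed by $f\xi_j = 0$, and $f(f^2Z) = -fZ$. After these simplifications the key algebraic identity $Y + f^2Y - \sum_j \eta_j(Y)\xi_j = 0$ collapses the two $\alpha_i\eta_i(X)$-type correction pieces, leaving four surviving coefficient families: those of $g(fX,fY)\xi_i$, $\eta_i(Y)f^2X$, $g(fX,Y)\xi_i$ and $\eta_i(Y)fX$. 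Adding the contribution of $(\nabla_X f)Y$ from (\ref{nabla}) and comparing with the desired right-hand side of (\ref{nabla}) for the deformed data -- where $\widetilde g(fX,fY) = b\,g(fX,fY)$, $\widetilde g(fX,Y) = b\,g(fX,Y)$, $\widetilde\xi_i = \xi_i/a$ and $\widetilde\eta_i(Y) = a\eta_i(Y)$ -- matching coefficients directly yields $\widetilde\alpha_i = \alpha_i a/b$ and $\widetilde\beta_i = (\xi_i b)/(2ab) + \beta_i/a$. This establishes that the deformed manifold is an almost trans-$S$-manifold with the claimed functions.

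For normality, Theorem \ref{teonablaxi} reduces the task to verifying $\widetilde\nabla_X \widetilde\xi_i = -\widetilde\alpha_i fX - \widetilde\beta_i f^2X$. I would use $\widetilde\nabla_X(\xi_i/a) = -(Xa/a^2)\xi_i + (1/a)\widetilde\nabla_X\xi_i$ and expand $\widetilde\nabla_X\xi_i$ from (\ref{conexiondconf}) with $Y = \xi_i$; the terms involving $g(fX,f\xi_i)$, $f^2\xi_i$ and $\eta_i(X)f\xi_i$ vanish, and substituting $\nabla_X\xi_i = -\alpha_i fX - \beta_i f^2 X$ from (\ref{ts2}) converts the remainder into $fX$- and $f^2X$-pieces whose coefficients, after using $\xi a^2 = 2a(\xi a)$ and the cancellation of the $(Xa/a^2)\xi_i$-term against the $\xi_i$-component produced by the third summand of (\ref{conexiondconf}), reduce to exactly the announced $-\widetilde\alpha_i$ and $-\widetilde\beta_i$. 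The main obstacle throughout is bookkeeping rather than ideas: dozens of terms indexed by $i,j,k$ must be tracked, and the clean cancellations depend essentially on the restriction that $a,b$ depend only on structure directions, on (\ref{ts2}) for the original connection, and on the pointwise algebra of the $f$-structure. Once one is systematic about grouping contributions by their tensorial type, the two identifications fall out.
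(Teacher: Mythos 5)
Your first step---expanding $(\widetilde\nabla_X\widetilde f)Y=\widetilde\nabla_X(fY)-f(\widetilde\nabla_XY)$ via (\ref{conexiondconf}), using $\eta_i\circ f=0$, $(fY)a=(fY)b=0$ and $f^3=-f$ to kill the spurious terms, substituting (\ref{nabla}) for $(\nabla_Xf)Y$, and rewriting the surviving coefficients in the tilded quantities---is exactly the paper's own proof, and it is correct: the deformed structure satisfies (\ref{nabla}) with the stated $\widetilde\alpha_i,\widetilde\beta_i$. The paper stops there, so your intention to finish by checking (\ref{ts2}) and invoking Theorem \ref{teonablaxi} to get normality is the right instinct and goes beyond what the paper actually writes.

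However, the cancellation you claim in that second step does not occur when $s\ge 2$. Putting $Y=\xi_i$ in the third summand of (\ref{conexiondconf}) produces three $\mathcal{M}$-valued pieces,
$$\frac{Xa^2}{2a^2}\,\xi_i+\frac{\xi_ia^2}{2a^2}\sum_{k=1}^s\eta_k(X)\xi_k-\frac{\eta_i(X)}{2a^2}\sum_{k=1}^s(\xi_ka^2)\xi_k,$$
and, after the overall factor $1/a$ coming from $\widetilde\xi_i=\xi_i/a$, only the first is absorbed by the derivative term $-(Xa/a^2)\xi_i$. The remaining two cancel each other only for $s=1$; for $s\ge 2$, taking $X=\xi_j$ with $j\neq i$ leaves a term proportional to $(\xi_ia)\,\xi_j$, so $\widetilde\nabla_{\xi_j}\widetilde\xi_i$ has a nonzero $\widetilde\eta_j$-component whenever $\xi_ia\neq 0$, and (\ref{ts2}) fails for the deformed structure. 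This is not a bookkeeping slip that more systematic grouping would repair: one checks directly that $[\widetilde\xi_i,\widetilde\xi_j]=a^{-3}\{(\xi_ja)\xi_i-(\xi_ia)\xi_j\}$, which by (\ref{normalbracket}) already obstructs normality unless $\xi_ia=0$ for every $i$, i.e.\ unless $a$ is constant. So your step 2 cannot be completed as described for non-constant $a$ and $s\ge 2$: what the computation actually delivers is an almost trans-$S$-manifold, and the normality claim needs the additional hypothesis $\xi_ia=0$---a point the paper's proof passes over in silence by never verifying normality at all.
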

\begin{proof}
By using (\ref{conexiondconf}) and taking into account that $b$ only depends on the directions of the structure vector fields, we have
\begin{equation*}
\begin{split}
(\widetilde\nabla_X\widetilde f)Y&=(\nabla_Xf)Y-\sum_{i=1}^s\frac{2(a^2-b)\beta_i-\xi_ib}{2a^2}g(fX,Y)\xi_i\\
&-\frac{1}{2b}\sum_{i=1}^s(\xi_ib)\eta_i(Y)fX+\frac{a^2-b}{b}\sum_{i=1}^s\alpha_i\eta_i(Y)f^2X,
\end{split}
\end{equation*}
for any $X,Y\in\mathcal{X}(M)$. Now, since $M$ is trans-S-manifold, from (\ref{nabla}) and (\ref{dconf}) we obtain
\begin{equation*}
\begin{split}
(\widetilde\nabla_X\widetilde f)Y &=\sum_{i=1}^s\{\frac{\alpha_ia}{b}(\widetilde g(\widetilde fX,\widetilde fY)\widetilde\xi_i+\widetilde\eta_i(Y)X)\\
&+\left(\frac{\xi_ib}{2ab}+\frac{\b_i}{a}\right)(\widetilde g(\widetilde fX,Y)\widetilde\xi_i-\widetilde\eta_i(Y)\widetilde fX)\},
\end{split}
\end{equation*}
and this completes the proof.
\end{proof}

Note that if $M$ is a Sasakian manifold, that is, if $s=1$, $\alpha=1$ and $\beta=0$, this method does not produce an $(\alpha,\beta)$ trans-Sasakian manifold but a $(\alpha,0)$ one because, by Darboux's theorem, if $a,b$ only depend of the direction of $\xi$, they should be constants.
\begin{cor}
Let $(M,f,\xi_1,\dots,\xi_s,\eta_1,\dots,\eta_s,g)$ be an $S$-manifold and consider a ge\-ne\-ra\-lized $D$-conformal deformation on $M$, with $a,b$ positive functions depen\-ding only
on the directions of the structure vector fields. Then $(M,\widetilde f,\widetilde\xi_1,\dots,\widetilde\xi_s,\widetilde\eta_1,\dots,\widetilde\eta_s,\widetilde g)$ is a trans-$S$-manifold with functions:
\begin{equation*}
\widetilde\alpha_i=\fra{a}{b},\mbox{  }\widetilde\beta_i=\fra{\xi_ib}{2ab},\mbox{    }i=1,\dots,s.
\end{equation*}
\end{cor}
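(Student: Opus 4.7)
The plan is to read this off as a direct specialization of Theorem \ref{dancinginthedark}. First I would point out that an $S$-manifold is itself a particular trans-$S$-manifold: comparing the defining formula \eqref{nablaXf} for the covariant derivative of $f$ on an $S$-manifold with the trans-$S$ identity \eqref{nabla}, one sees that $(M,f,\xi_1,\dots,\xi_s,\eta_1,\dots,\eta_s,g)$ is a trans-$S$-manifold with constant characteristic functions $\alpha_i=1$ and $\beta_i=0$ for all $i=1,\dots,s$ (this is also the content of the Corollary following Proposition \ref{p31}). In particular, $M$ is normal, so Theorem \ref{dancinginthedark} applies.

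Next I would simply substitute these values of $\alpha_i$ and $\beta_i$ into the formulas produced by Theorem \ref{dancinginthedark}. The hypotheses on $a$ and $b$ (positive smooth functions depending only on the directions of the $\xi_i$) are exactly the hypotheses of that theorem, so the deformed structure $(\widetilde f,\widetilde\xi_1,\dots,\widetilde\xi_s,\widetilde\eta_1,\dots,\widetilde\eta_s,\widetilde g)$ defined by \eqref{dconf} is again a trans-$S$-manifold, with characteristic functions
$$\widetilde\alpha_i=\frac{\alpha_i a}{b}=\frac{a}{b}, \qquad \widetilde\beta_i=\frac{\xi_i b}{2ab}+\frac{\beta_i}{a}=\frac{\xi_i b}{2ab},$$
for $i=1,\dots,s$, which is precisely the claim. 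There is no real obstacle here: the corollary is essentially a bookkeeping specialization of the preceding theorem, and the only thing to check is the harmless identification of $S$-manifolds with the $(\alpha_i,\beta_i)=(1,0)$ case of trans-$S$-manifolds.
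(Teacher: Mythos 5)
Your proposal is correct and is exactly the argument the paper intends: the corollary is stated without proof as an immediate specialization of Theorem \ref{dancinginthedark}, using the fact (noted in the paper via \eqref{nablaXf}) that an $S$-manifold is a trans-$S$-manifold with $\alpha_i=1$, $\beta_i=0$. Substituting these constants into $\widetilde\alpha_i=\alpha_i a/b$ and $\widetilde\beta_i=\xi_i b/(2ab)+\beta_i/a$ gives the stated functions, as you did.
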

\begin{cor}
Let $(M,f,\xi_1,\dots,\xi_s,\eta_1,\dots,\eta_s,g)$ be an $C$-manifold and consider a ge\-ne\-ra\-lized $D$-conformal deformation on $M$, with $a,b$ positive functions depen\-ding only
on the directions of the structure vector fields. Then $(M,\widetilde f,\widetilde\xi_1,\dots,\widetilde\xi_s,\widetilde\eta_1,\dots,\widetilde\eta_s,\widetilde g)$ is a trans-$S$-manifold with functions:
\begin{equation*}
\widetilde\alpha_i=0,\mbox{    }\widetilde\beta_i=\fra{\xi_ib}{2ab},\mbox{    }i=1,\dots,s.
\end{equation*}
\end{cor}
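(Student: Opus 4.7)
The plan is to obtain this corollary as an immediate specialization of Theorem \ref{dancinginthedark}. First I would recall (as noted at the beginning of Section \ref{examples}) that a $C$-manifold is automatically a trans-$S$-manifold whose characteristic functions all vanish: $\alpha_i = \beta_i = 0$ for every $i=1,\dots,s$. This follows from $\nabla f = 0$ on a $C$-manifold, which makes the right-hand side of (\ref{nabla}) trivially zero.

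Having placed the hypothesis inside the framework of Theorem \ref{dancinginthedark}, I would simply substitute $\alpha_i = 0$ and $\beta_i = 0$ into the formulas
\begin{equation*}
\widetilde\alpha_i = \frac{\alpha_i a}{b}, \qquad \widetilde\beta_i = \frac{\xi_i b}{2ab} + \frac{\beta_i}{a},
\end{equation*}
which yields $\widetilde\alpha_i = 0$ and $\widetilde\beta_i = \xi_i b/(2ab)$, exactly the functions claimed in the statement. Theorem \ref{dancinginthedark} also guarantees that the deformed structure is itself a trans-$S$-manifold, so no further verification of normality or of the identity (\ref{nabla}) is required.

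In short, there is no genuine obstacle here: the whole content is verifying that the general formulas of Theorem \ref{dancinginthedark} collapse correctly when the source manifold is a $C$-manifold. The only thing worth emphasizing in the write-up is the initial remark that $\nabla f = 0$ forces $\alpha_i = \beta_i = 0$, so that Theorem \ref{dancinginthedark} is in fact applicable; everything after that is one line of substitution.
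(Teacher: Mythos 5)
Your proposal is correct and matches the paper's (implicit) argument exactly: the corollary is stated without separate proof precisely because a $C$-manifold is a trans-$S$-manifold with $\alpha_i=\beta_i=0$ (by $\nabla f=0$), and substituting these values into the formulas of Theorem \ref{dancinginthedark} gives $\widetilde\alpha_i=0$ and $\widetilde\beta_i=\xi_ib/(2ab)$. Nothing further is needed.
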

\begin{cor}
Let $(M,f,\xi_1,\dots,\xi_s,\eta_1,\dots,\eta_s,g)$ be a generalized Kenmotsu manifold and consider a ge\-ne\-ra\-lized $D$-conformal deformation on $M$, with $a,b$ positive functions depen\-ding only
on the directions of the structure vector fields. Then $(M,\widetilde f,\widetilde\xi_1,\dots,\widetilde\xi_s,\widetilde\eta_1,\dots,\widetilde\eta_s,\widetilde g)$ is a trans-$S$-manifold with functions:
\begin{equation*}
\widetilde\alpha_i=0,\mbox{    }\widetilde\beta_i=\frac{\xi_ib}{2ab}+\frac{1}{a},\mbox{    }i=1,\dots,s.
\end{equation*}
\end{cor}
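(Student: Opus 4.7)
The plan is to reduce the statement to a direct application of Theorem \ref{dancinginthedark}. The key observation, already recorded at the beginning of this section, is that a generalized Kenmotsu manifold (in the sense of Bhatt--Dube and Turgut Vanli--Sari) is itself a trans-$S$-manifold whose characteristic functions are the constants $\alpha_1=\cdots=\alpha_s=0$ and $\beta_1=\cdots=\beta_s=1$. Once this identification is made, the hypothesis ``trans-$S$-manifold'' in Theorem \ref{dancinginthedark} is satisfied, and the conclusion of that theorem applies verbatim under the additional assumption that $a$ and $b$ depend only on the directions of the structure vector fields $\xi_i$.

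First I would restate the starting data in the language of trans-$S$-structures, recalling the identification above. Then I would invoke Theorem \ref{dancinginthedark} to conclude that $(M,\widetilde f,\widetilde\xi_1,\dots,\widetilde\xi_s,\widetilde\eta_1,\dots,\widetilde\eta_s,\widetilde g)$ is a trans-$S$-manifold whose characteristic functions are given by
\begin{equation*}
\widetilde\alpha_i=\frac{\alpha_i a}{b},\qquad \widetilde\beta_i=\frac{\xi_i b}{2ab}+\frac{\beta_i}{a},\qquad i=1,\dots,s.
\end{equation*}
Finally, substituting $\alpha_i=0$ and $\beta_i=1$ into these formulas immediately yields $\widetilde\alpha_i=0$ and $\widetilde\beta_i=\frac{\xi_i b}{2ab}+\frac{1}{a}$, as claimed.

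There is no genuine obstacle here: the corollary is a specialization of the preceding theorem to a particular pair of values of the characteristic functions. The only point requiring any care is the initial reinterpretation of a generalized Kenmotsu manifold as a trans-$S$-manifold with the prescribed constants, which has already been justified in this section via Theorem 2.4 of \cite{TS}. Hence the proof amounts to a one-line substitution once the identification is recalled.
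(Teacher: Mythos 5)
Your proposal is correct and matches the paper's (implicit) argument exactly: the paper states these corollaries without proof precisely because, having already recorded that generalized Kenmotsu manifolds are trans-$S$-manifolds with $\alpha_i=0$ and $\beta_i=1$, the result is an immediate substitution into Theorem \ref{dancinginthedark}.
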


Next, we are going to construct more examples of trans-$S$-manifolds by using warped products. For later use, we need the following lemma from \cite{oneill} to compute the Riemannian connection of a warped product:
\begin{lem}\label{conexwp}
Let us consider $M=B\times _{h} F$ and denote by $\nabla$, $\nabla^{B}$ and $\nabla^{F}$ the Riemannian connections on $M$, $B$ and $F$. If $X,Y$ are tangent vector fields on $B$ and $V,W$ are tangent vector fields on $F$, then:
\begin{enumerate}
\item[(i)]$\nabla_{X}Y$ is the lift of $\nabla^{B}_{X}Y$.
\item[(ii)]$\nabla_{X}V=\nabla_{V}X=(Xh/h)V.$
\item[(iii)]The component
of $\nabla_{V}W$ normal to the fibers is:
$$-(g_{h}(V,W)/h)\mbox{\rm grad } h.$$
\item[(iv)]The component of $\nabla_{V}W$ tangent to the fibers is the lift of $\nabla^{F} _{V}W$.
\end{enumerate}
\end{lem}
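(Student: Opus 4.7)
The plan is to prove all four statements of Lemma \ref{conexwp} in a uniform way by applying the Koszul formula
\begin{equation*}
2g(\nabla_A B, C) = Ag(B,C) + Bg(A,C) - Cg(A,B) + g([A,B],C) - g([B,C],A) + g([C,A],B)
\end{equation*}
on $M = B\times_h F$ to pairs of horizontal/vertical lifts and testing against a third horizontal/vertical lift. I would begin by recording the warped-product metric $g = \pi_B^*g_B + (h\circ\pi_B)^2\pi_F^*g_F$ together with the basic facts that horizontal lifts commute with vertical lifts (so $[X,V]=0$ whenever $X$ is horizontal and $V$ is vertical), that the bracket of two horizontal lifts is the horizontal lift of the bracket, that $Vh=0$ for any vertical $V$ (since $h$ depends only on $B$), and that for $V,W$ vertical, $g(V,W) = h^2 g_F(V,W)$ where I use the usual abuse of notation identifying lifts with their projections.

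For \textbf{(i)}, I take $X,Y,Z$ all horizontal. Every term on the right of Koszul's formula then collapses to the corresponding expression for $g_B$ and $\nabla^B$, because horizontal brackets stay horizontal and derivatives of $g(Y,Z)=g_B(Y,Z)$ along horizontal directions equal the $B$-derivatives; moreover testing $\nabla_X Y$ against a vertical $V$ gives $0$ after using $[X,V]=[Y,V]=0$ and $Xg(Y,V)=Yg(X,V)=0$, so $\nabla_XY$ has no vertical component. For \textbf{(ii)}, using $[X,V]=0$ and the torsion-free property, $\nabla_X V = \nabla_V X$; applying Koszul to $\nabla_V X$ tested against a horizontal $Y$ yields $0$ (all derivatives of $h$ in vertical directions vanish, and the remaining terms cancel), while tested against a vertical $W$ it gives $Xg(V,W) - g([V,W],X)$; since $g(V,W) = h^2 g_F(V,W)$ and $X(h^2 g_F(V,W)) = 2h(Xh) g_F(V,W)$ (as $Xg_F(V,W)=0$, because $X$ is the lift of a vector on $B$ and $g_F(V,W)$ is a function on $F$), and $g([V,W],X)=0$ since $[V,W]$ is vertical, one obtains $2g(\nabla_V X, W) = 2(Xh/h)\,g(V,W)$, hence $\nabla_X V = (Xh/h)V$.

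For \textbf{(iii)} and \textbf{(iv)}, I decompose $\nabla_V W$ into its horizontal and vertical components. Testing against a horizontal $X$ and using $[V,X]=[W,X]=0$ together with $Vg(W,X)=Wg(V,X)=0$, Koszul gives
\begin{equation*}
2g(\nabla_V W, X) = -Xg(V,W) = -X(h^2 g_F(V,W)) = -2h(Xh)g_F(V,W),
\end{equation*}
and since $g_F(V,W) = g_h(V,W)/h^2$ (using the notation $g_h$ of the statement for the fibre metric), this becomes $-2(Xh)\,g_h(V,W)/h = -2 g(X,\grad h)\, g_h(V,W)/h$, so the horizontal component of $\nabla_V W$ is $-(g_h(V,W)/h)\grad h$, proving (iii). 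For (iv), testing against a vertical $U$ and using that vertical brackets are vertical and that $Vh = Wh = Uh = 0$, every term in the Koszul formula reduces to the analogous Koszul expression for the fibre metric $g_F$ multiplied by the common factor $h^2$, yielding precisely the lift of $\nabla^F_V W$ as the vertical component.

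The bookkeeping step that takes the most care is \textbf{(iii)}: one must consistently distinguish between the fibre metric $g_F$ (independent of $h$) and the induced metric on the fibres of $M$ (scaled by $h^2$), and correctly identify $-h(Xh)g_F(V,W)$ with the scalar paired against $\grad h$ via the ambient metric $g$. Once this identification is made, the remaining computations are routine applications of Koszul and cost little beyond tracking which brackets and derivatives vanish.
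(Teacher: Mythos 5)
Your Koszul-formula argument is correct: the bookkeeping of which brackets vanish, which metric terms are lifted functions, and the identification $g_h(V,W)=h^2g_F(V,W)$ in part (iii) all check out. Note, however, that the paper offers no proof of this lemma at all -- it is quoted verbatim from O'Neill \cite{oneill} -- and the proof given there is essentially the same Koszul computation you carried out, so your argument matches the standard one in the cited source.
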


In this context, given an almost Hermitian manifold $(N,J,G)$, the warped product $\w M=\R^s\times_{h}N$ can be endowed with a metric $f$-structure $(\w f,\w\xi_1,\dots,\w\xi_s,\w\eta_1,\dots,\w\eta_s,g_h)$, with the warped metric
\begin{equation*} \label{metwpRW}
g_{h}=-\pi^{*}(g_{\R^s})+(h\circ\pi )^{2}\sigma ^{*}(G),
\end{equation*}
where $h>0$ is a differentiable function on $\R^s$ and $\pi$ and $\sigma$ are the
projections from $\R^s\times N$ on $\R^s$ and $N$, respectively.  In fact, $\w f(\w X)=(J\sigma _{*}\w X)^{*}$, for any vector field $\w X\in\mathcal{X}(\w M)$ and $\w\xi_i=\partial/\partial t_i$, $i=1,\dots,s$, where $t_i$ denotes the coordinates of $\R^s$. Note that this metric is the one used to construct the
Robertson-Walker spaces (see \cite{oneill}).

Now, we study the structure of this warped product.
\begin{theorem} \label{wpestructura}
Let $N$ be an almost Hermitian manifold. Then, the warped product $(\w M=\R^s\times_h N,\w f,\w\xi_1,\dots,\w\xi_s,\w\eta_1,\dots,\w\eta_s,g_h)$ is a trans-$S$-man\-i\-fold
with functions $\w\alpha_1=\cdots=\w\alpha_s=0$ and $\w\beta_i={h^{i)}}/{h}$, $i=1,\dots,s$, if and only if $N$ is a Kaehlerian manifold, where $h^{i)}$ are denoting the components of the gradient of the function $h$, for $i=1,\dots,s$.
\end{theorem}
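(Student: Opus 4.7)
Since $\widetilde{\alpha}_i = 0$, the identity one needs to check on $\widetilde M$ is
\begin{equation*}
(\widetilde\nabla_{\widetilde X}\widetilde f)\widetilde Y = \sum_{i=1}^s \frac{h^{i)}}{h}\bigl\{g_h(\widetilde f\widetilde X,\widetilde Y)\widetilde\xi_i - \widetilde\eta_i(\widetilde Y)\widetilde f\widetilde X\bigr\}.
\end{equation*}
Both sides are tensorial, so thanks to the splitting $T\widetilde M = \mathrm{span}\{\widetilde\xi_i\}\oplus(\text{lifts of }TN)$ it suffices to verify the identity in the four cases where each of $\widetilde X,\widetilde Y$ is either a structure field $\widetilde\xi_i$ or a lift $V$ of a vector field on $N$. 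My plan is to run through these cases using Lemma \ref{conexwp}, and then invoke Theorem \ref{teonablaxi} to upgrade the identity to normality.

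The three cases with at least one argument in $\mathrm{span}\{\widetilde\xi_i\}$ are routine and independent of the structure of $N$. For $\widetilde X=\widetilde\xi_i,\widetilde Y=\widetilde\xi_j$ both sides vanish, since $\widetilde f\widetilde\xi=0$ and $\R^s$ is flat. For $\widetilde X=\widetilde\xi_i,\widetilde Y=V$, the two pieces $\widetilde\nabla_{\widetilde\xi_i}(\widetilde f V)$ and $\widetilde f(\widetilde\nabla_{\widetilde\xi_i}V)$ each pick up the factor $h^{i)}/h$ via Lemma \ref{conexwp}(ii) and cancel, while the right-hand side is zero because $\widetilde f\widetilde\xi_i=0=\widetilde\eta_i(V)$. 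For $\widetilde X=V,\widetilde Y=\widetilde\xi_j$, torsion-freeness combined with $[\widetilde\xi_j,V]=0$ yields $\widetilde\nabla_V\widetilde\xi_j = \widetilde\nabla_{\widetilde\xi_j}V = (h^{j)}/h)V$, from which the left-hand side is $-(h^{j)}/h)\widetilde fV$, exactly matching the right-hand side.

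The substantive case is $\widetilde X = V,\widetilde Y = W$ with both lifts from $N$. Using parts (iii) and (iv) of Lemma \ref{conexwp} together with $\widetilde f W = (JW)^*$, a direct computation gives
\begin{equation*}
(\widetilde\nabla_V\widetilde f)W = \bigl((\nabla^N_V J)W\bigr)^{*} + (\text{base-direction term}),
\end{equation*}
where the base-direction term collects the two contributions $-(g_h(V,\widetilde f W)/h)\,\mathrm{grad}\,h$ and $\widetilde f(-(g_h(V,W)/h)\,\mathrm{grad}\,h)=0$, producing a multiple of $h\,G(V,JW)\sum_i h^{i)}\widetilde\xi_i$. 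The right-hand side of the identity equals $\sum_i (h^{i)}/h)\,h^2G(JV,W)\widetilde\xi_i$, and the Hermitian antisymmetry $G(JV,W)=-G(V,JW)$ makes the two base parts agree. Thus the identity is equivalent to $(\nabla^N_V J)W = 0$ for all $V,W\in\mathcal{X}(N)$, which is precisely the Kähler condition on $N$; this gives both directions of the equivalence.

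Finally, assuming $N$ is Kähler so the characteristic identity holds, I would verify the connection formula $\widetilde\nabla_{\widetilde X}\widetilde\xi_i = -(h^{i)}/h)\widetilde f^2\widetilde X$ directly on the same splitting, using Lemma \ref{conexwp}(i)–(ii) and $[\widetilde\xi_i,V]=0$; Theorem \ref{teonablaxi} then promotes $\widetilde M$ to a genuine trans-$S$-manifold. The main technical obstacle is the sign bookkeeping in the base-direction calculation: the interplay of $\mathrm{grad}\,h$ taken with respect to the semi-Riemannian metric $g_h$, the coefficient $-g_h(V,W)/h$ of Lemma \ref{conexwp}(iii), and the antisymmetry of $J$ under $G$ must conspire to cancel exactly, and it is this cancellation that forces the coefficient $\widetilde\beta_i$ to be precisely $h^{i)}/h$.
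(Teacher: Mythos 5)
Your proposal is correct and follows essentially the same route as the paper: compute $(\widetilde\nabla\widetilde f)$ via Lemma \ref{conexwp}, observe that the only obstruction to identity (\ref{nabla}) is the term $(\nabla^N J)$, and then verify $\widetilde\nabla_{\widetilde X}\widetilde\xi_i=-(h^{i)}/h)\widetilde f^2\widetilde X$ so that Theorem \ref{teonablaxi} yields normality. The only cosmetic difference is that you organize the computation case-by-case over the splitting $T\widetilde M$ while the paper does it in one pass with $\widetilde X=U+X$, $\widetilde Y=V+Y$.
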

\begin{proof} Consider $\w X=U+X$ and $\w Y=V+Y$, where $U,V$ and $X,Y$ are tangent vector fields on $\R^s$ and $N$, respectively. Then, taking into account Lemma \ref{conexwp}, if $\w\nabla$ and $\nabla^N$ denote the Riemannian connections of $\w M$ and $N$, respectively, we have:
\begin{equation*}
\begin{split}
(\w\nabla_{\w X}\w f)\w Y=&\w\nabla_{U}JX+\nabla_XJY\\
&-\w f(\w\nabla_{U}V+\w\nabla_XV+\w\nabla_{U}Y+\w\nabla_XY)\\
=&\frac{U(h)}{h}JY-\frac{g_h(X,JY)}{h}\grad(h)+\nabla_{X}^{N}JY\\ &-f(\nabla_UV+\frac{V(h)}{h}X+\frac{U(h)}{h}Y-\frac{g_h(X,Y)}{h}\grad(h)+\nabla_XY)\\
=&-\frac{g_h(X,JY)}{h}\grad(h)-\frac{V(h)}{h}JX+(\nabla_X^NJ)Y\\
=&\fra{g_h(JX,Y)}{h}\sum_{i=1}^sh^{i)}\w\xi_i-\sum_{i=1}^s \w\eta_i(V)\frac{h^{i)}}{h}JX+(\nabla_X^NJ)Y\\
=&\fra{g_h(\w f\w X,\w Y)}{h}\sum_{i=1}^sh^{i)}\w\xi_i-\sum_{i=1}^s
\w\eta_i(V)\frac{h^{i)}}{h}\w f\w X+(\nabla_X^NJ)Y.
\end{split}
\end{equation*}

Therefore, (\ref{nabla}) holds if and only if $(\nabla_{U}^{N}J)V=0$, that is, if and only if $N$ is a Kaehlerian manifold. Moreover, for any $i=1,\dots,s$,
$$\w\nabla_{\w X}\w\xi_i=\nabla_{U}\w\xi_i+\nabla_{X}\w\xi_i=\fra{h^{i)}}{h}X=\fra{h^{i)}}{h}(\w X-\sum_{i=1}^{s}\w\eta(\w X)\w\xi_i)=-\frac{h^{i)}}{h}\w f^2\w X$$
and then, Theorem \ref{teonablaxi} gives the result.
\end{proof}
\begin{cor} The warped product $\R^s\times_h N$, being $N$ a Kaehlerian manifold and $h$ a constant function, is a $C$-manifold. In particular, if $h=1$, the Riemannian product $\R^s\times N$ is a $C$-manifold.
\end{cor}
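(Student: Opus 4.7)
The plan is to derive this corollary as a direct specialization of Theorem \ref{wpestructura} combined with the earlier characterization of $C$-manifolds among trans-$S$-manifolds (the corollary stating that a trans-$S$-manifold is a $C$-manifold if and only if $\alpha_i=\beta_i=0$ for all $i$). No fresh computation with Koszul's formula should be necessary.

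First, since $N$ is assumed to be Kaehlerian, Theorem \ref{wpestructura} applies and tells us that $\R^s\times_h N$ is a trans-$S$-manifold with characteristic functions $\w\alpha_i=0$ and $\w\beta_i=h^{i)}/h$ for $i=1,\dots,s$, where $h^{i)}$ denote the components of $\grad h$ in the $\R^s$ directions.

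Next, I would use the hypothesis that $h$ is constant. Since $h$ depends only on the coordinates $t_1,\dots,t_s$ of $\R^s$, being constant means all its partial derivatives $h^{i)}=\partial h/\partial t_i$ vanish, so $\w\beta_i=0$ for every $i$. Combined with $\w\alpha_i=0$, we obtain $\w\alpha_i=\w\beta_i=0$ for all $i=1,\dots,s$. By the earlier corollary characterizing $C$-manifolds among trans-$S$-manifolds (equivalently, by equation (\ref{nablaXinC}) applied to (\ref{ts2})), this forces $\R^s\times_h N$ to be a $C$-manifold.

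Finally, for the last sentence, the case $h=1$ is just the particular constant value for which the warped metric $g_h=-\pi^*(g_{\R^s})+(h\circ\pi)^2\sigma^*(G)$ reduces (up to sign conventions on $\R^s$) to the ordinary product metric, so $\R^s\times_h N$ becomes the Riemannian product $\R^s\times N$, which inherits the $C$-manifold structure from the previous paragraph. There is no genuine obstacle here; the only thing to be careful about is ensuring that the hypotheses of Theorem \ref{wpestructura} (namely $h>0$, which holds for any positive constant, and in particular for $h=1$) are satisfied, so that the theorem can legitimately be invoked.
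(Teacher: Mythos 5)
Your proposal is correct and matches the paper's (implicit) argument exactly: the corollary is stated as an immediate specialization of Theorem \ref{wpestructura}, with $h$ constant forcing $\w\alpha_i=\w\beta_i=0$ and the earlier characterization of $C$-manifolds among trans-$S$-manifolds finishing the job. No further commentary is needed.
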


Combining these examples with a generalized $D$-conformal deformation, a great variety of non-trivial trans-$S$-manifolds can be presented.

Moreover, if we do the warped product of $\R^s$ with a $(2n+s_1)$-dimensional (almost) trans-$S$-manifold $(M,f,\xi_1,\dots,\xi_{s_1},\eta_1,\dots,\eta_{s_1},g)$, we obtain a new metric $f$-manifold
$$(\w M=\R^s\times_h M, \w f,\w\xi_1,\dots,\w\xi_{s+s_1},\w\eta_1,\dots,\w\eta_{s+s_1},g_h),$$
where $\w f(\w X)=(f\sigma_*\w X)^*$ and:
$$\w\xi_i=\left\{\begin{array}{lcl}
\displaystyle{\frac{\partial}{\partial t_i}} & \mbox{\rm if} & 1\leq i\leq s,\\
\\
\displaystyle{\frac{1}{h}\xi_{i-s}} & \mbox{\rm if} & s+1\leq i\leq s+s_1.
\end{array}
\right.$$

These manifolds, under certain hypothesis about the function $h$, verify (\ref{nabla}) but not (\ref{ts2}), so from Theorem \ref{teonablaxi} they are not normal. Consequently, they are examples of almost trans-$S$-manifolds not trans-$S$-manifolds.

\begin{theorem}
Let $M$ be a $(2n+s_1)$-dimensional (almost) trans-S-manifold with functions $(\alpha_i,\beta_i)$, $i=1,\dots,s_1$. Then, the warped product $\w M=\R^s\times_hM$, with the metric $f$-structure defined above, is a $(2n+s+s_1)$-dimensional almost trans-S-manifold with fuctions
$$\w\alpha_i=\left\{\begin{array}{ccl}
0 & \mbox{\rm for} & i=1,\dots,s,\\
\\
\displaystyle{\frac{\alpha_{i-s}}{h}} & \mbox{\rm for} & i=s+1,\dots,s+s_1.
\end{array}
\right.$$
and:
$$\w\beta_i=\left\{\begin{array}{ccl}
\displaystyle{\frac{h^{i)}}{h}} & \mbox{\rm for} & i=1,\dots,s,\\
\\
\displaystyle{\frac{\beta_{i-s}}{h}} & \mbox{\rm for} & i=s+1\dots,s+s_1.
\end{array}
\right.$$
\end{theorem}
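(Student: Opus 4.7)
The plan is to verify the defining equation (\ref{nabla}) by a direct computation of $(\widetilde\nabla_{\widetilde X}\widetilde f)\widetilde Y$ using O'Neill's connection formulas of Lemma \ref{conexwp}. The argument parallels the proof of Theorem \ref{wpestructura}, but now the fiber $M$ carries its own almost trans-$S$-structure instead of just a Kaehlerian one, so the right hand side of (\ref{nabla}) on $\widetilde M$ will pick up two separate contributions: one from the warping function $h$ (giving $\widetilde\beta_i=h^{i)}/h$ for $i\le s$ with $\widetilde\alpha_i=0$) and one from the fiber structure itself (giving the rescaled $\widetilde\alpha_{j+s}=\alpha_j/h$ and $\widetilde\beta_{j+s}=\beta_j/h$ for $j=1,\dots,s_1$).

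First I split $\widetilde X=U+X$ and $\widetilde Y=V+Y$ with $U,V$ tangent to $\R^s$ and $X,Y$ tangent to $M$. Since $\widetilde f$ kills the horizontal part and acts as $f$ on vertical lifts, $\widetilde f\widetilde Y=fY$. Applying Lemma \ref{conexwp} to compute $\widetilde\nabla_{\widetilde X}(fY)$ and $\widetilde\nabla_{\widetilde X}\widetilde Y$ in each of the four horizontal/vertical combinations and subtracting $\widetilde f(\widetilde\nabla_{\widetilde X}\widetilde Y)$, the purely horizontal contributions vanish (since $\widetilde f$ annihilates them and the base is flat) and the cross terms $(U(h)/h)fY$ cancel against each other, leaving
$$(\widetilde\nabla_{\widetilde X}\widetilde f)\widetilde Y \; = \; (\nabla^M_X f)Y \; - \; \fra{V(h)}{h}\,fX \; - \; \fra{g_h(X,fY)}{h}\,\grad(h),$$
which is exactly the expression obtained in the proof of Theorem \ref{wpestructura} but with the Kaehler fiber term $(\nabla^M_XJ)Y$ replaced by the trans-$S$ fiber term $(\nabla^M_Xf)Y$.

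Next I substitute (\ref{nabla}) for $(\nabla^M_Xf)Y$ on the fiber $M$ and translate the fiber data into the warped data through the dictionary $\xi_j=h\,\widetilde\xi_{j+s}$, $\eta_j(Y)=h^{-1}\widetilde\eta_{j+s}(\widetilde Y)$, $g(fX,fY)=h^{-2}g_h(\widetilde f\widetilde X,\widetilde f\widetilde Y)$, $g(fX,Y)=h^{-2}g_h(\widetilde f\widetilde X,\widetilde Y)$, and $f^2X=\widetilde f^2\widetilde X$ on vertical lifts. Collecting, this reproduces the summand of (\ref{nabla}) on $\widetilde M$ for the indices $i=s+1,\dots,s+s_1$ with exactly the claimed coefficients $\widetilde\alpha_{j+s}=\alpha_j/h$ and $\widetilde\beta_{j+s}=\beta_j/h$. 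The remaining two warping terms are then handled by writing $\grad(h)=\sum_{i=1}^s h^{i)}\widetilde\xi_i$ and $V(h)=\sum_{i=1}^s h^{i)}\widetilde\eta_i(\widetilde Y)$; this converts them into precisely the summand of (\ref{nabla}) for $i=1,\dots,s$ with $\widetilde\alpha_i=0$ and $\widetilde\beta_i=h^{i)}/h$, concluding that (\ref{nabla}) holds for $\widetilde M$ with the stated functions.

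The main obstacle is purely bookkeeping: keeping straight which $\widetilde\xi_i$ and $\widetilde\eta_i$ come from base coordinates (for $i\le s$, where $\widetilde\xi_i=\partial/\partial t_i$) and which from rescaled fiber fields (for $i>s$, where $\widetilde\xi_i=h^{-1}\xi_{i-s}$ and correspondingly $\widetilde\eta_i=h\,\eta_{i-s}$), together with the attendant powers of $h$ arising from the warped metric. Once this dictionary is fixed, the identification is a formal rewriting and no additional calculation is needed. Note that one deliberately does \emph{not} verify (\ref{ts2}); as the paragraph preceding the statement indicates, the computation of $\widetilde\nabla_{\widetilde X}\widetilde\xi_i$ for $i>s$ carries extra terms coming from $\widetilde\xi_i=h^{-1}\xi_{i-s}$ that obstruct (\ref{ts2}), so by Theorem \ref{teonablaxi} the structure fails to be normal and $\widetilde M$ is genuinely almost trans-$S$ but not trans-$S$.
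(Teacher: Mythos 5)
Your proposal is correct and follows essentially the same route as the paper: the same splitting $\w X=U+X$, $\w Y=V+Y$, the same application of Lemma \ref{conexwp} yielding $(\w\nabla_{\w X}\w f)\w Y=(\nabla_Xf)Y-\frac{V(h)}{h}fX-\frac{g_h(X,fY)}{h}\grad(h)$, and the same rewriting of the fiber and warping terms via $\xi_{j}=h\,\w\xi_{j+s}$, $\w\eta_{j+s}=h\,\eta_{j}$ and $\grad(h)=\sum_i h^{i)}\w\xi_i$ to recover (\ref{nabla}) with the stated functions. Your closing remark on the failure of (\ref{ts2}) also matches the paper's discussion following the proof.
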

\begin{proof} Consider $\w X=U+X$ and $\w Y=V+Y$, where $U,V$ and $X,Y$ are tangent vector fields on $\R^s$ and $M$, respectively. Then, taking into account Lemma \ref{conexwp}, if $\nabla$ is the Riemannian connection of $M$, we deduce:
\begin{equation*}
\begin{split}
(\nabla_{\w X}\w f)\w Y=&-\frac{g_h(X,fY)}{h}\grad(h)-\frac{V(h)}{h}fX+(\nabla_Xf)Y\\
=&\fra{g_h(fX,Y)}{h}\sum_{i=1}^sh^{i)}\frac{\partial}{\partial t_i}-\sum_{i=1}^sV(t_i)\frac{h^{i)}}{h}fX\\
&+\sum_{i=s+1}^{s+s_1}\left[\alpha_{i-s}\left\{g(fX,fY)\xi_{i-s}+\eta_{i-s}(Y)f^2X\right\}\right.\\
&+\left.\beta_{i-s}\left\{g(fX,Y)\xi_{i-s}-\eta_{i-s}(Y)fX\right\}\right]\\
=&\sum_{i=1}^s\fra{h^{i)}}{h}\{g_h(\w f\w X, \w Y)\w\xi_i-\w\eta_{i}(\w Y)\w f\w X\}\\
&+\sum_{i=s+1}^{s+s_1}\left[\frac{\alpha_{i-s}}{h}\{g_h(\w f\w X,\w f\w Y)\w\xi_{i-s}+\w\eta_{i-s}(\w Y)\w f^2\w X\}\right.\\
&+\left.\frac{\beta_{i-s}}{h}\{g_h(\w f\w X,\w Y)\w\xi_{i-s}-\w\eta_{i-s}(\w Y)\w f\w X\}\right].
\end{split}
\end{equation*}

Joining the addends appropriately, it takes the form of (\ref{nabla}) with the desired functions. Therefore, $\w M$ is a almost trans-$S$-manifold.
\end{proof}

Observe that, in the above conditions, (\ref{ts2}) is not verified in general. In fact, consider $\w\xi_i$ with $1\leq i\leq s$. Then, for any $\w X\in\mathcal{X}(\w M)$,
$$\w\nabla_{\w X}\w\xi_i=\fra{h^{i)}}{h}U=\fra{h^{i)}}{h}(\w X-\sum_{j=1}^{s}\w\eta_j(\w X)\xi_j)$$
and so, if $h$ is not a constant function, from Theorem \ref{teonablaxi}, we get that $\w M$ is not a trans-$S$-manifold.
\begin{cor} The warped product $\R^s\times_h M$, being $M$ a trans-$S$-manifold, is a trans-$S$-manifold if and only if $h$ is constant. In particular, the Riemannian product $\R^s\times M$ is a trans-$S$-manifold with functions
$$(0,\stackrel{s)}\dots,0,\alpha_1,\dots,\alpha_{s_1},0,\stackrel{s)}\dots,0,\beta_1,\dots,\beta_{s_1}),$$
where $(\alpha_i,\beta_i)$, $i=1,\dots,s_1$, denote the characteristic functions of $M$.
\end{cor}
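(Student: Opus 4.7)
The plan is to leverage the preceding theorem together with Theorem \ref{teonablaxi}. The preceding theorem already shows that $\w M=\R^s\times_h M$ is an almost trans-$S$-manifold with the listed functions $\w\a_i,\w\b_i$, so the only task is to decide when normality holds, equivalently, when (\ref{ts2}) is satisfied for every structure vector field.

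First I would handle the $\R^s$-type structure vector fields $\w\xi_i$ with $1\le i\le s$. The computation displayed just before the statement already gives
\begin{equation*}
\w\nabla_{\w X}\w\xi_i=\fra{h^{i)}}{h}\Bigl(\w X-\s_{j=1}^{s}\w\e_j(\w X)\w\xi_j\Bigr).
\end{equation*}
On the other hand, (\ref{ts2}) applied in $\w M$ with $\w\a_i=0$ and $\w\b_i=h^{i)}/h$ reads
\begin{equation*}
\w\nabla_{\w X}\w\xi_i=-\w\b_i\w f^{\,2}\w X=\fra{h^{i)}}{h}\Bigl(\w X-\s_{k=1}^{s+s_1}\w\e_k(\w X)\w\xi_k\Bigr),
\end{equation*}
because $\w f^{\,2}=-I+\s_{k=1}^{s+s_1}\w\e_k\otimes\w\xi_k$ involves \emph{all} $s+s_1$ structure vector fields. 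Comparing the two, (\ref{ts2}) is equivalent to $\fra{h^{i)}}{h}\s_{k=s+1}^{s+s_1}\w\e_k(\w X)\w\xi_k=0$ for every $\w X$, which forces $h^{i)}=0$ for each $i=1,\dots,s$, i.e., $h$ is constant on $\R^s$.

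Next I would check (\ref{ts2}) for the remaining $\w\xi_i=\xi_{i-s}/h$ with $s+1\le i\le s+s_1$. Writing $\w X=U+X$ and applying Lemma \ref{conexwp} to
$\w\nabla_{\w X}(\xi_{i-s}/h)=\frac{1}{h}\w\nabla_{\w X}\xi_{i-s}-\frac{1}{h^2}\w X(h)\xi_{i-s}$, a routine bookkeeping (the component along the base uses (ii), the tangential part on the fiber uses (iv), together with the trans-$S$ relation (\ref{ts2}) for $M$) reduces, once $h$ is already known to be constant, to $\w\nabla_{\w X}\w\xi_i=\frac{1}{h}\nabla_X\xi_{i-s}=-\frac{\a_{i-s}}{h}\w f\w X-\frac{\b_{i-s}}{h}\w f^{\,2}\w X$, which matches $-\w\a_i\w f\w X-\w\b_i\w f^{\,2}\w X$ with the claimed functions. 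Thus Theorem \ref{teonablaxi} gives normality, so $\w M$ is a trans-$S$-manifold, completing the equivalence.

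Finally, the ``in particular'' assertion is immediate: substituting $h\equiv 1$ in the functions $\w\a_i,\w\b_i$ from the preceding theorem yields $\w\a_i=0,\w\b_i=0$ for $1\le i\le s$ and $\w\a_i=\a_{i-s},\w\b_i=\b_{i-s}$ for $s+1\le i\le s+s_1$, which is precisely the list in the statement. The main (mild) obstacle is purely bookkeeping: one must be careful that $\w f^{\,2}$ expands over all $s+s_1$ structure vector fields, since it is exactly the ``extra'' contributions from $\w\xi_{s+1},\dots,\w\xi_{s+s_1}$ that obstruct (\ref{ts2}) when $h$ is non-constant; no new geometric idea beyond the formulas already set up before the statement is required.
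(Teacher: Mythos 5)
Your proposal is correct and follows essentially the same route as the paper: the paper proves the corollary via the preceding theorem together with the displayed computation of $\w\nabla_{\w X}\w\xi_i$ for $1\le i\le s$ (whose mismatch with $-\w\beta_i\w f^2\w X$ is exactly the ``extra'' sum over $\w\xi_{s+1},\dots,\w\xi_{s+s_1}$ you identify) and Theorem \ref{teonablaxi}. Your explicit verification of (\ref{ts2}) for the fiber structure vector fields when $h$ is constant is a detail the paper leaves implicit, but it is the intended argument.
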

\begin{cor}
Let $M$ be a Sasakian manifold. Then, the warped product $\R\times_h M$ is a almost trans-$S$-manifold with functions:
$$\alpha_1=0,\mbox{ }\alpha_2=\fra{1}{h},\mbox{ }\beta_1=\frac{h'}{h}\mbox{ and }\beta_2=0.$$
\end{cor}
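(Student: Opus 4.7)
The plan is to recognize this corollary as a direct specialization of the preceding theorem. The Sasakian manifold $M$ is, in the language of this paper, a trans-$S$-manifold with $s_1 = 1$ structure vector field and characteristic functions $\alpha_1 = 1$, $\beta_1 = 0$ (this is the case $s=1$, $\alpha=1$, $\beta=0$ of (\ref{trans})). The warped product in question is $\R^s \times_h M$ with $s = 1$, so the new total number of structure vector fields is $s + s_1 = 2$.

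Applying the preceding theorem verbatim, I would simply substitute $s = 1$ and $s_1 = 1$ together with $\alpha_1 = 1$, $\beta_1 = 0$ into the piecewise formulas for the characteristic functions of $\widetilde M$. For the $\widetilde\alpha$'s: the theorem yields $\widetilde\alpha_1 = 0$ (the range $1 \leq i \leq s$ contributes a zero) and $\widetilde\alpha_2 = \alpha_{2-1}/h = 1/h$. For the $\widetilde\beta$'s: $\widetilde\beta_1 = h^{1)}/h = h'/h$ (since $s = 1$, the gradient of $h$ on $\R^s = \R$ is just the ordinary derivative $h'$) and $\widetilde\beta_2 = \beta_{2-1}/h = 0$.

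There is essentially no obstacle, since the work has already been done in the theorem. The only thing worth a brief sentence in the write-up is the identification $h^{1)} = h'$ when $s = 1$, to explain why the first component of $\operatorname{grad} h$ reduces to the derivative with respect to the single coordinate $t_1$ on $\R$. One may also remark, as the authors do just before the corollary, that $\widetilde M$ is only an almost trans-$S$-manifold and not a trans-$S$-manifold whenever $h$ is not constant, because (\ref{ts2}) fails for $\widetilde\xi_1$ as shown in the discussion preceding the statement.
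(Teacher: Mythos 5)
Your proposal is correct and matches the paper's (implicit) argument: the corollary is stated as an immediate specialization of the preceding theorem with $s=s_1=1$ and the Sasakian data $\alpha_1=1$, $\beta_1=0$, exactly as you compute. The identification $h^{1)}=h'$ and the remark about normality failing for non-constant $h$ are the right touches; nothing is missing.
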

\begin{cor}
Let $M$ be a three dimensional trans-Sasakian, with non-constant characteristic functions $\alpha$ and $\beta$. Then, the warped product $\R\times_h M$ is a four dimensional almost trans-$S$-manifold not trans-$S$-manifold with functions:
$$\alpha_1=0,\mbox{ }\alpha_2=\fra{\alpha}{h},\mbox{ }\beta_1=\frac{h'}{h}\mbox{ and }\beta_2=\fra{\beta}{h}.$$
\end{cor}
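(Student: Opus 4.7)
The plan is to apply the preceding theorem directly, with $s=1$ (so the base of the warped product is $\R$) and with the $3$-dimensional trans-Sasakian manifold $M$ regarded as a trans-$S$-manifold with $s_1=1$, $n=1$, and characteristic functions $(\alpha_1,\beta_1)=(\alpha,\beta)$. The dimensions match: $\widetilde M=\R\times_h M$ has dimension $2n+s+s_1=4$ as required.

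Reading off the characteristic functions is then a matter of substituting $s=s_1=1$ into the theorem's piecewise formulas. For the index $i=1$, lying in the first range $1,\dots,s$, I would obtain $\widetilde\alpha_1=0$ and $\widetilde\beta_1=h^{1)}/h=h'/h$, where $h^{1)}=h'$ since the gradient of a function on $\R$ reduces to its ordinary derivative. For $i=2$, lying in the second range $s+1,\dots,s+s_1$, the formulas give $\widetilde\alpha_2=\alpha_{2-1}/h=\alpha/h$ and $\widetilde\beta_2=\beta_{2-1}/h=\beta/h$. This establishes that $\widetilde M$ is an almost trans-$S$-manifold with precisely the claimed characteristic functions.

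To see that $\widetilde M$ is \emph{not} a trans-$S$-manifold, I would invoke the remark immediately following the preceding theorem, which already computes $\widetilde\nabla_{\widetilde X}\widetilde\xi_1=(h'/h)U$ for $\widetilde X=U+X$. Comparing this with the normality requirement (\ref{ts2}), namely $\widetilde\nabla_{\widetilde X}\widetilde\xi_1=-\widetilde\alpha_1\widetilde f\widetilde X-\widetilde\beta_1\widetilde f^2\widetilde X$ with $\widetilde\alpha_1=0$ and $\widetilde\beta_1=h'/h$, one sees that the LHS lies in the $\R$-base factor while the RHS lies in the $\mathcal{L}$-part of the $M$-fiber; testing on a nonzero horizontal vector field $\widetilde X$ tangent to $M$ shows these two expressions disagree whenever $h'\neq 0$. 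Under the implicit hypothesis that $h$ is non-constant (which is consistent with the earlier corollary stating that the warped product is trans-$S$ only when $h$ is constant), Theorem \ref{teonablaxi} then yields the failure of normality.

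The argument is essentially pure substitution into the preceding theorem, so there is no serious obstacle; the one thing that requires care is the index bookkeeping between the two ranges $\{1,\dots,s\}$ and $\{s+1,\dots,s+s_1\}$, which here correspond respectively to the $\R$-factor structure vector field $\widetilde\xi_1=\partial/\partial t$ and to the rescaled structure vector field $\widetilde\xi_2=\xi/h$ coming from $M$.
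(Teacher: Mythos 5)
Your proposal is correct and coincides with the paper's own (implicit) argument: the corollary is stated without a separate proof precisely because it is the specialization $s=s_1=1$, $n=1$ of the preceding theorem together with the remark that follows it, and your index bookkeeping and the identification $h^{1)}=h'$ are right. One detail to adjust in the non-normality step: the correct computation from Lemma \ref{conexwp}(ii) gives $\widetilde\nabla_{\widetilde X}\widetilde\xi_1=(h'/h)X$, where $X$ is the component of $\widetilde X$ tangent to the fiber $M$ (the $U$ in the paper's display is a typo), so on a horizontal field $\widetilde X=X\in\mathcal{L}$ both sides of (\ref{ts2}) equal $(h'/h)X$ and no contradiction appears; the failure of (\ref{ts2}) is seen instead on $\widetilde X=\widetilde\xi_2$, where the left-hand side is $(h'/h)\widetilde\xi_2\neq 0$ while $-\widetilde\alpha_1\widetilde f\widetilde\xi_2-\widetilde\beta_1\widetilde f^2\widetilde\xi_2=0$. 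You are also right to flag that the conclusion ``not trans-$S$'' genuinely requires $h$ to be non-constant, a hypothesis the corollary leaves implicit (and which is not implied by the non-constancy of $\alpha$ and $\beta$).
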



\begin{thebibliography}{99}
\bibitem{BD} L. Bhatt and K.K. Dube, {\it Semi-invariant submanifolds of $r$-Kenmotsu manifolds}, Acta Cienc. Indica Math. {\bf 29(1)} (2003), 167-172.
\bibitem{B} D.E. Blair, {\it Geometry of manifolds with structural group $\mathcal{U}(n)\times\mathcal{O}(s)$}, J. Diff. Geom. {\bf 4} (1970), 155-167.
\bibitem{BLAIRBOOK} D.E. Blair, ``Riemannian Geometry of Contact and Symplectic Manifolds", 2nd. Ed., Progress in Mathematics, Vol. 209, Birkh\"{a}user, New York, 2010.
\bibitem{BO} D.E. Blair and J.A. Oubi\~{n}a, {\it Conformal and related changes of metric on the product of two almost contact metric manifolds}, Pub. Mat. {\bf 34} (1990), 199-207.
\bibitem{falcitelli} M. Falcitelli and A. M. Pastore, {\it $f$-structures of Kenmotsu type}, Mediterr. J. Math. {\bf 3} (2006), 549-564.
\bibitem{F} L.M. Fern\'andez, ``Variedades con $K$-estructuras. Subvariedades", Ph.D. Thesis, University of Sevilla, Sevilla, 1987.
\bibitem{GY} S.I. Goldberg and K. Yano, {\it On normal globally framed $f$-manifolds}, T$\hat{\rm o}$hoku Math. J. {\bf 22} (1970), 362-370.
\bibitem{hoa} I. Hasegawa, Y. Okuyama and T. Abe, {\it On $p$-th Sasakian manifolds}, J. Hokkaido Univ. Edu. Section II A {\bf 37(1)} (1986), 1-16.
\bibitem{olszack} Z. Olszak, {\it Curvature properties of quasi-Sasakian manifolds}, Tensor N.S. {\bf 38} (1982), 19-28.
\bibitem{oneill} B. O'Neill, ``Semi-Riemannian Geometry with Applications to Re\-la\-ti\-vi\-ty",
Pure and Applied Mathematics, Vol. 103, Academic Press, New York, 1983.
\bibitem{oubi} J.A. Oubi\~{n}a, {\it New classes of almost contact metric structures}, Pub. Mat. {\bf 32} (1985), 187-193.
\bibitem{suguri} S. Suguri and S. Nakayama, {\it $D$--conformal deformations on almost contact metric structure}, Tensor N.S. {\bf 28} (1974), 125-129.
\bibitem{tanno} S. Tanno, {\it The topology of contact Riemannian manifolds}, Illinois J. Math. {\bf 12} (1968), 700-717.
\bibitem{TS} A. Turgut Vanli and R. Sari, {\it Generalized Kenmotsu manifolds}, Com. Math. Appl. {\bf 7}(4) (2016), 311-328.
\bibitem{Y} K. Yano, {\it On a structure defined by a tensor field $f$ of type (1,1) satisfying $f^3+f=0$}, Tensor {\bf 14} (1963), 99-109.
\end{thebibliography}
\end{document}